\newcommand{\cA}{\mathcal{A}}
\newcommand{\cC}{\mathcal{C}}
\newcommand{\cD}{\mathcal{D}}
\newcommand{\cL}{\mathcal{L}}
\newcommand{\cM}{\mathcal{M}}
\newcommand{\cN}{\mathcal{N}}
\newcommand{\cP}{\mathcal{P}}
\newcommand{\bE}{\mathbb{E}}
\newcommand{\bN}{\mathbb{N}}
\newcommand{\bR}{\mathbb{R}}
\newcommand{\PR}{\mathbb{P}}
\newcommand{\bONE}{\mathbbm{1}}
\newcommand{\dd}{ \mathrm{d}}
\renewcommand{\epsilon}{\varepsilon}
\newcommand{\vn}[1]{\left| \! \left| #1\right| \! \right|}
\newcommand{\ip}[2]{\langle #1,#2\rangle}
\numberwithin{equation}{section}
\newtheorem{theorem}{Theorem}[section]
\newtheorem{lemma}[theorem]{Lemma}
\newtheorem{proposition}[theorem]{Proposition}
\theoremstyle{definition}
\newtheorem{definition}[theorem]{Definition}
\newtheorem{remark}[theorem]{Remark}
\newtheorem{example}[theorem]{Example}
\begin{document}

\title{Large deviations for Markov jump processes with mean-field interaction via the comparison principle for an associated Hamilton-Jacobi equation}

\author{
\renewcommand{\thefootnote}{\arabic{footnote}}
Richard Kraaij
\footnotemark[1]
}

\footnotetext[1]{
Delft Institute of Applied Mathematics, Delft University of Technology, Mekelweg 4, 
2628 CD Delft, The Netherlands, E-mail: \texttt{r.c.kraaij@tudelft.nl}.
}

\maketitle

\begin{abstract}
We prove the large deviation principle for the trajectory of a broad class of mean field interacting Markov jump processes via a general analytic approach based on viscosity solutions. Examples include generalized Ehrenfest models as well as Curie-Weiss spin flip dynamics with singular jump rates.

The main step in the proof of the large deviation principle, which is of independent interest, is the proof of the comparison principle for an associated collection of Hamilton-Jacobi equations. 

Additionally, we show that the large deviation principle provides a general method to identify a Lyapunov function for the associated McKean-Vlasov equation.
\end{abstract}



\section{Introduction}

We consider two models of Markov jump processes with mean field interaction. In both cases, we have $n$ particles or spins that evolve as a pure jump process, where the jump rates of the individual particles depend on the empirical distribution of all $n$ particles.

We prove the large deviation principle(LDP) for the trajectory of these empirical quantities and show that the rate function is in Lagrangian form. The first set of models that we consider are conservative models that generalize the Ehrenfest model. In the one dimensional setting, this model is also known as the Moran model without mutation or selection. For these models, the empirical quantity of interest for large $n$ is the empirical magnetisation. The second class of models are jump processes of Glauber type such as Curie-Weiss spin flip dynamics. In this case, the empirical measure is given by
\begin{equation*}
\mu_n(t) := \frac{1}{n}\sum_{i \leq n} \delta_{\sigma_i(t)},
\end{equation*}
where $\sigma_i(t) \in \{1,\dots,d\}$ is the state of the $i$-th spin at time $t$. Under some appropriate conditions, the trajectory $\mu_n(t)$ converges as $n \rightarrow \infty$ to $\mu(t)$, the solution of a McKean-Vlasov equation, which is a generalization of the linear Kolmogorov forward equation which would appear in the case of independent particles. 

For the second class of models, we obtain a large deviation principle for the trajectory of these empirical measures on the space $D_{\cP(\{1,\dots,d\})}(\bR^+)$ of c\`{a}dl\`{a}g paths on $E := \cP(\{1,\dots,d\})$ of the form
\begin{equation*}
\PR\left[\{\mu_n(t)\}_{t \geq 0} \approx \gamma\right] \approx e^{-nI(\gamma)}
\end{equation*}
where 
\begin{equation*}
I(\gamma) = I(\gamma(0)) + \int_0^\infty \cL(\gamma(s),\dot{\gamma}(s)) \dd s
\end{equation*}
for trajectories $\gamma$ that are absolutely continuous and $I(\gamma) = \infty$ otherwise. In particular, $I(\gamma) = 0$ for the solution $\gamma$ of the limiting McKean-Vlasov equation. The \textit{Lagrangian} $\cL : E \times \bR^d \rightarrow \bR^+$ is defined as the Legendre transform of a \textit{Hamiltionan} $H : E \times \bR^d \rightarrow \bR$ that can be obtained via a limiting procedure 
\begin{equation} \label{eqn:convergence_of_H_intro}
H(x,\nabla f(x)) = Hf(x) = \lim_n \frac{1}{n} e^{nf} A_n e^{nf}.
\end{equation}
Here $A_n$ is the generator of the Markov process of $\{\mu_n(t)\}_{t \geq 0}$. More details on the models and definitions follow shortly in Section \ref{section:main_results}.

Recent applications of the path-space large deviation principle are found in the study of mean-field Gibbs-non-Gibbs transitions, see e.g. \cite{EK10,EFHR10} or the microscopic origin of gradient flow structures, see e.g. \cite{AdDiPeZi13,MPR13}. Other authors have considered the path-space LDP in various contexts before, see for example \cite{FW98,Co89,Le95,DPdH96,Fe94b,BuDuMa11,BoSu12}. A comparison with these results follows in Section \ref{section:comparison_to_results_in_literature}.

The novel aspect of this paper with respect to large deviations for jump processes is an approach via a class of \textit{Hamilton-Jacobi} equations. In \cite{FK06}, a general strategy is proposed for the study for large deviations of trajectories which is based on the convergence of non-linear semigroups. As in the theory of weak convergence of Markov processes, this program is carried out in two steps, first one proves convergence of the generators, i.e. \eqref{eqn:convergence_of_H_intro}, and secondly one shows that $H$ is indeed the generator of a semigroup.

The latter issue is non trivial and follows for example by showing that the Hamilton-Jacobi equation
\begin{equation} \label{eqn:Hamilton_Jacobi_equation_intro}
f(x) - \lambda H(x,\nabla f(x)) - h(x) = 0
\end{equation}
has a unique solution $f$ for all $h \in C(E)$ and $\lambda >0$ in the viscosity sense. It is exactly this problem that is the main focus of the paper. An extra bonus of this approach is that the conditions on the Markov processes for finite $N$ are weaker then in previous studies, and allow for singular behaviour in the jump rate for a particle to move from $a$ to $b$ in boundary regions when the empirical average $\mu(a)$ is close to $0$.

This approach via the Hamilton-Jacobi equation has been carried out in \cite{FK06} for Levy processes on $\bR^d$, systems with multiple time scales and for stochastic equations in infinite dimensions. In \cite{DFL11}, the LDP for a diffusion process on $(0,\infty)$ is treated with singular behaviour close to $0$.

\smallskip

As a direct consequence of our large deviation principle, we obtain a straightforward method to find Lyapunov functions for the limiting McKean-Vlasov equation. If $A_n$ is the linear generator of the empirical quantity of interest of the $n$-particle process, the operator $A$ obtained by $Af = \lim_n A_n f$ can be represented by $Af(\mu) = \ip{\nabla f(\mu)}{\mathbf{F}(\mu)}$ for some vector field $\mathbf{F}$. If solutions to 
\begin{equation} \label{eqn:vector_field_equation_intro}
\dot{\mu}(t) = \mathbf{F}(\mu(t))
\end{equation}
are unique for a given starting point and if the empirical measures $\mu_n(0)$ converges to $\mu(0)$, the empirical measures $\{\mu_n(t)\}_{t \geq 0}$ converge almost surely to a solution $\{\mu(t)\}_{t \geq 0}$ of \eqref{eqn:vector_field_equation_intro}. In Section \ref{subsection:Htheorems}, we will show that if the stationary measures of $A_n$ satisfy a large deviation principle on $\cP(\{1,\dots,d\})$ with rate function $I_0$, then $I_0$ is a Lyapunov function for \eqref{eqn:vector_field_equation_intro}. 

\smallskip

The paper is organised as follows. In Section \ref{section:main_results}, we introduce the models and state our results. Additionally, we  give some examples to show how to apply the theorems. In Section \ref{section:LDP_via_HJequation}, we recall the main results from \cite{FK06} that relate the Hamilton-Jacobi equations \eqref{eqn:Hamilton_Jacobi_equation_intro} to the large deviation problem. Additionally, we verify conditions from \cite{FK06} that are necessary to obtain our large deviation result wit a rate function in Lagrangian form, in the case that we have uniqueness of solutions to the Hamilton-Jacobi equations. Finally, in Section \ref{section:viscosity_solutions} we prove uniqueness of viscosity solutions to \eqref{eqn:Hamilton_Jacobi_equation_intro}.

\section{Main results} \label{section:main_results}

\subsection{Two models of interacting jump processes} \label{section:two_models}

We do a large deviation analysis of the trajectory of the empirical magnetization or distribution for two models of interacting spin-flip systems. The first setting is a $d$-dimensional Ehrenfest model.

\smallskip

\textbf{Generalized Ehrenfest model in $d$-dimensions.}

Consider $d$-dimensional spins $\sigma = (\sigma(1),\dots,\sigma(n)) \in (\{-1,1\}^d)^n$. For example, we can interpret this as $n$ individuals with $d$ types, either being $-1$ or $1$. For $k \leq n$, we denote the $i$-th coordinate of $\sigma(k)$ by $\sigma_i(k)$. Set $x_n = (x_{n,1},\dots,x_{n,d}) \in E_1 := [-1,1]^d$, where $x_{n,i} = x_{n,i}(\sigma) = \frac{1}{n} \sum_{j=1}^n \sigma_i(j)$ the empirical magnetisation in the $i$-th spin. For later convenience, denote by $E_{1,n}$ the discrete subspace of $E_1$ which is the image of $(\{-1,1\}^d)^n$ under the map $\sigma \mapsto x_n(\sigma)$. The spins evolve according to mean-field Markovian dynamics with generator $\cA_n$:
\begin{multline*}
\cA_n f(\sigma) = \sum_{i = 1}^d \sum_{j = 1}^n \bONE_{\{\sigma_i(j) = -1\}} r_{n,+}^i(x_n(\sigma)) \left[f(\sigma^{i,j}) - f(\sigma)\right] \\
+ \sum_{i = 1}^d \sum_{j = 1}^n \bONE_{\{\sigma_i(j) = 1\}} r_{n,-}^i(x_n(\sigma)) \left[f(\sigma^{i,j}) - f(\sigma)\right].
\end{multline*}
The configuration $\sigma^{i,j}$ is obtained by flipping the $i$-th coordinate of the $j$-th spin. The functions $r_{n,+}^i,r_{n,-}^i$ are non-negative and represent the jump rate of the $i$-th spin flipping from a $-1$ to $1$ or vice-versa.

The empirical magnetisation $x_n$ itself also behaves Markovian and has generator
\begin{multline*}
A_n f(x) = \sum_{i = 1}^d \Bigg\{ n \frac{1-x_i}{2}r_{n,+}^i(x) \left[f\left(x + \frac{2}{n}e_i\right) - f(x) \right] \\
+ n \frac{1+x_i}{2}r_{n,-}^i(x)\left[f\left(x - \frac{2}{n}e_i\right) - f(x) \right] \Bigg\},
\end{multline*}
where $e_i$ the vector consisting of $0$'s, and a $1$ in the $i$-th component.

Under suitable conditions on the rates $r_{n,+}^i$ and $r_{n,-}^i$, we will derive a large deviation principle for the trajectory $\{x_n(t)\}_{t \geq 0}$ in the Skorokhod space $D_{E_1}(\bR^+)$ of right continuous $E_1$ valued paths that have left limits. 

\smallskip

\textbf{Systems of Glauber type with $d$ states.}

We will also study the large deviation behaviour of copies of a Markov process on $\{1,\dots,d\}$ that evolve under the influence of some mean-field interaction. Here $\sigma = (\sigma(1),\dots,\sigma(n)) \in \{1,\dots,d\}^n$ and the empirical distribution $\mu$ is given by $\mu_n(\sigma) = \frac{1}{n} \sum_{i \leq n} \delta_{\sigma(i)}$ which takes values in
\begin{equation*}
E_{2,n} := \left\{\mu \in \cP(E) \, \middle| \, \mu = \frac{1}{n} \sum_{i=1}^n \delta_{x_i}, \text{ for some } x_i \in \{1,\dots,d\} \right\}.
\end{equation*}
Of course, this set can be seen as discrete subset of $E_2 := \cP(\{1,\dots,d\}) = \{\mu \in \bR^d \, | \, \mu_i \geq 0, \sum_i \mu_i = 1\}$. We take some $n$-dependent family of jump kernels $r_n : \{1,\dots, d\}\times\{1,\dots, d\}\times E_n \rightarrow \bR^+$ and define Markovian evolutions for $\sigma$ by
\begin{equation*}
\cA_nf(\sigma(1),\dots,\sigma(n)) = \sum_{i=1}^n \sum_{b = 1}^d r_n\left(\sigma(i),b, \frac{1}{n} \sum_{i=1}^n \delta_{\sigma(i)}\right) \left[f(\sigma^{i,b}) - f(\sigma) \right],
\end{equation*}
where $\sigma^{i,b}$ is the configuration obtained from $\sigma$ by changing the $i$-th coordinate to $b$. Again, we have an effective evolution for $\mu_n$, which is governed by the generator
\begin{equation*}
A_n f(\mu) = n \sum_{a,b} \mu(a)r_n(a,b,\mu)\left[f\left(\mu - n^{-1}\delta_a + n^{-1} \delta_b\right) - f(\mu)\right].
\end{equation*}
As in the first model, we will prove, under suitable conditions on the jump kernels $r_n$ a large deviation principle in $n$ for $\{\mu_n(t)\}_{t \geq 0}$ in the Skorokhod space $D_{E_2}(\bR^+)$.

\subsection{Large deviation principles} \label{section:main_ldp}

The main results in this paper are the two large deviation principles for the two sets of models introduced above. To be precise, we say that the sequence $x_n \in D_{E_1}(\bR^+)$, or for the second case $\mu_n \in D_{E_2}(\bR^+)$, satisfies the large deviation principle with rate function $I : D_{E_1}(\bR^+) \rightarrow [0,\infty]$ if $I$ is lower semi-continuous and the following two inequalities hold:
\begin{enumerate}[(a)]
\item For all closed sets $G \subseteq D_{E_1}(\bR^+)$, we have
\begin{equation*}
\limsup_{n \rightarrow \infty} \frac{1}{n} \log \PR[\{x_n(t)\}_{t \geq 0} \in G] \leq - \inf_{\gamma \in G} I(\gamma).
\end{equation*}
\item For all open sets $U \subseteq D_{E_1}(\bR^+)$, we have
\begin{equation*}
\liminf_{n \rightarrow \infty} \frac{1}{n} \log \PR[\{x_n(t)\}_{t \geq 0} \in U] \geq - \inf_{\gamma \in G} I(\gamma).
\end{equation*}
\end{enumerate}
For the definition of the Skorokhod topology defined on $D_{E_1}(\bR^+)$, see for example \cite{EK86}. We say that $I$ is \textit{good} if the level sets $I^{-1}[0,a]$ are compact for all $a \geq 0$. 

\smallskip

For a trajectory $\gamma \in D_{E_1}(\bR)$, we say that $\gamma \in \cA\cC$ if the trajectory is absolutely continuous. For the $d$-dimensional Ehrenfest model, we have the following result.

\begin{theorem} \label{theorem:ldp_ehrenfest_dd}
Suppose that there exists a family of continuous functions $v_+^i, v_-^i : E_1 \rightarrow \bR^+$, $1 \leq i \leq d$, such that
\begin{equation} \label{eqn:main_convergence_condition_for_ldp_ehrenfest}
\lim_{n \rightarrow \infty} \sup_{x \in E_n} \sum_{i=1}^d \left|\frac{1-x_i}{2}r_{n,+}^i(x) - v_+^i(x)\right| + \left|\frac{1+x_i}{2}r_{n,-}^i(x) - v_-^i(x)\right|= 0.
\end{equation}
Suppose that for every $i$, the functions $v_+^i$ and $v_-^i$ satisfy the following.

The rate $v_+^i$ is identically zero or we have the following set of conditions.
\begin{enumerate}[(a)]
\item $v_+^i(x) > 0$ if $x_i \neq 1$.
\item For $z \in [-1,1]^d$ such that $z_i = 1$, we have $v_+^i(z) = 0$ and for every such $z$ there exists a neighbourhood $U_z$ of $z$ on which there exists a decomposition $v_+^i(x) = v_{+,z,\dagger}^i(x_i) v_{+,z,\ddagger}^i(x)$, where $v_{+,z,\dagger}^i$ is decreasing and where $v_{+,z,\ddagger}^i$ is continuous and satisfies $v_{+,z,\ddagger}^i(z) \neq 0$.
\end{enumerate}
The rate $v_-^i$ is identically zero or we have the following set of conditions.
\begin{enumerate}[(a)]
\item $v_-^i(x) > 0$ if $x_i \neq -1$.
\item For $z \in [-1,1]^d$ such that $z_i = -1$, we have $v_-^i(z) = 0$ and for every such $z$ there exists a neighbourhood $U_z$ of $z$ on which there exists a decomposition $v_-^i(x) = v_{-,z,\dagger}^i(x_i) v_{-,z,\ddagger}^i(x)$, where $v_{+,z,\dagger}^i$ is increasing and where $v_{-,z,\ddagger}^i$ is continuous and satisfies $v_{-,z,\ddagger}^i(z) \neq 0$.
\end{enumerate}

Furthermore, suppose that $\{x_n(0)\}_{n \geq 1}$ satisfies the large deviation principle on $E_1$ with good rate function $I_0$. Then, $\{x_n\}_{n \geq 1}$ satisfies the large deviation principle on $D_{E_1}(\bR^+)$ with good rate function $I$ given by
\begin{equation*}
I(\gamma) = 
\begin{cases}
I_0(\gamma(0)) + \int_0^\infty \cL(\gamma(s),\dot{\gamma}(s)) \dd s & \text{if } \gamma \in \cA\cC, \\
\infty & \text{otherwise}
\end{cases}
\end{equation*}
where the \textit{Lagrangian} $\cL(x,v) : E_1 \times \bR^d \rightarrow \bR$ is given by the Legendre transform $\cL(x,v) = \sup_{p \in \bR^d} \ip{p}{v} - H(x,p)$ of the \textit{Hamiltonian} $H: E_1 \times \bR^d \rightarrow \bR$, defined by
\begin{equation} \label{eqn:def_Hamiltionian_Ehrenfest}
H(x,p) = \sum_{i = 1}^d v_+^i(x) \left[e^{2p_i} - 1 \right] + v_-^i(x) \left[e^{-2p_i} - 1 \right].
\end{equation}
\end{theorem}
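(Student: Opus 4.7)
The plan is to apply the abstract machinery of \cite{FK06}, which reduces the path-space LDP for $\{x_n\}$ to three ingredients: (i) convergence of the nonlinear generators $H_n f := \tfrac{1}{n} e^{-nf} A_n e^{nf}$ to a limit operator $Hf(x) = H(x,\nabla f(x))$, (ii) the comparison principle for the associated Hamilton-Jacobi equation $f - \lambda H f = h$, and (iii) exponential tightness together with an LDP for the initial conditions. Part (iii) is given for $\{x_n(0)\}$ by hypothesis and, for trajectories, follows from compactness of $E_1$ together with uniform boundedness of the prefactors $\tfrac{1\pm x_i}{2} r_{n,\pm}^i$ implied by \eqref{eqn:main_convergence_condition_for_ldp_ehrenfest} and continuity of $v_\pm^i$ on the compact cube. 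So the real work is (i) and (ii).

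For (i), a direct computation on $f \in C^1(E_1)$ gives
\begin{equation*}
H_n f(x) = \sum_{i=1}^d \frac{1-x_i}{2} r_{n,+}^i(x) \left[e^{n[f(x+\frac{2}{n}e_i)-f(x)]} - 1\right] + \frac{1+x_i}{2} r_{n,-}^i(x) \left[e^{n[f(x-\frac{2}{n}e_i)-f(x)]} - 1\right].
\end{equation*}
Since $n[f(x\pm \tfrac{2}{n}e_i) - f(x)] \to \pm 2 \partial_i f(x)$ uniformly on $E_1$, and \eqref{eqn:main_convergence_condition_for_ldp_ehrenfest} gives uniform convergence of the prefactors to $v_\pm^i$, we obtain $H_n f \to Hf$ uniformly, with $H$ given by \eqref{eqn:def_Hamiltionian_Ehrenfest}. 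This matches the convergence-of-operators hypothesis required by the Feng-Kurtz scheme and also explains the specific form of the Hamiltonian stated in the theorem.

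The hard part is (ii): comparison for viscosity sub- and supersolutions of $u - \lambda H(x,\nabla u) = h$ on the compact cube $E_1 = [-1,1]^d$. The obstacle is the boundary: the rates $v_\pm^i$ vanish on the faces $\{x_i = \pm 1\}$, so $H$ fails to be coercive there and the usual doubling-of-variables argument breaks down because the penalized maximizers can concentrate on the degenerate face. My plan, which I expect to be carried out in Section~\ref{section:viscosity_solutions} in the spirit of \cite{DFL11}, is to run the standard doubling variables technique with a quadratic penalty $\alpha |x-y|^2$, but to exploit the local factorization $v_\pm^i(x) = v_{\pm,z,\dagger}^i(x_i)\,v_{\pm,z,\ddagger}^i(x)$ near each boundary face, together with the prescribed monotonicity of the factor $v_{\pm,z,\dagger}^i$ in the coordinate $x_i$, to show that at a doubled-variable maximum $(x_\alpha, y_\alpha)$ one always has $v_\pm^i(x_\alpha) \wedge v_\pm^i(y_\alpha)$ comparable, so the singular boundary contribution drops out of the difference $H(x_\alpha, \alpha(x_\alpha - y_\alpha)) - H(y_\alpha, \alpha(x_\alpha - y_\alpha))$. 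The remaining terms are handled by the fact that $p \mapsto H(x,p)$, with $v_\pm^i$ continuous and bounded on $E_1$, is Lipschitz on compact sets in $p$ uniformly in $x$, yielding the usual contradiction as $\alpha \to \infty$. This monotonicity-plus-factorization structure is precisely why hypotheses (b) are stated the way they are.

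Granted (i)--(iii), the Feng-Kurtz theorem yields the LDP with a good rate function of the variational form $I(\gamma) = I_0(\gamma(0)) + \int_0^\infty \cL(\gamma(s),\dot\gamma(s)) \dd s$ on absolutely continuous trajectories and $+\infty$ otherwise, where $\cL$ arises as the Legendre transform of $H$. Since $H$ is finite, convex and continuous in $p$ on all of $\bR^d$ (being a finite sum of exponentials with nonnegative coefficients), the Legendre transform $\cL(x,v) = \sup_{p} [\ip{p}{v} - H(x,p)]$ is well-defined, lower semicontinuous and nonnegative (as $H(x,0) = 0$), completing the identification of the rate function in the form claimed.
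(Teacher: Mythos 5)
Your overall architecture (operator convergence $H_n \to H$, comparison principle, LDP for initial conditions, then apply Feng--Kurtz) is exactly the route the paper takes, and your treatment of (i) is correct and matches Lemma~\ref{lemma:operator_convergence}. However, there are two places where the proposal elides non-trivial steps.

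First, the comparison-principle sketch misdescribes what the factorization hypothesis does. It is not that $v^i_\pm(x_\alpha)$ and $v^i_\pm(y_\alpha)$ are made ``comparable'' so that the boundary term ``drops out''; the boundary term is generically nonzero and possibly large. The actual mechanism in Propositions \ref{proposition:comparison_conditions_on_H}, \ref{proposition:comparison_ehrenfest_1d} is: Lemma~\ref{lemma:control_on_H} gives $\sup_\alpha H(y_\alpha, \alpha(x_\alpha - y_\alpha)) < \infty$, from which one extracts a one-sided bound on $\alpha(x_\alpha - y_\alpha)$. When $\alpha|x_\alpha - y_\alpha|$ stays bounded one passes to a converging subsequence and gets $0$ by continuity; when not, the unbounded direction of $\alpha(y_\alpha - x_\alpha)$ forces a definite sign of $y_{\alpha,i}-x_{\alpha,i}$, which combined with the monotonicity of $v^i_{\pm,z,\dagger}$ makes the ratio $v^i_{\pm,z,\dagger}(x_{\alpha,i})/v^i_{\pm,z,\dagger}(y_{\alpha,i})\le 1$, so the whole problematic product has $\liminf \le 0$. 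That sign argument, not comparability, is the content of hypothesis (b).

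Second, ``granted (i)--(iii), the Feng--Kurtz theorem yields the LDP with a good rate function of the variational form'' skips a genuine step. Theorem 6.14 of \cite{FK06} delivers the rate function only in the abstract form $I_0(\gamma(0)) + \sup_m \sup_{t_0<\dots<t_m}\sum_k I_{t_k - t_{k-1}}(\gamma(t_k)\,|\,\gamma(t_{k-1}))$ involving the nonlinear semigroup $V(t)$ (this is the paper's Theorem~\ref{theorem_LDP_viscosity_FK}). Identifying this with the Lagrangian integral requires showing $V(t)=\mathbf V(t)$, the Nisio semigroup, which is Corollary~8.29 in \cite{FK06} and needs Conditions 8.9 and 8.11 verified. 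The paper does this in Lemmas~\ref{lemma:condition_8_9} and \ref{lemma:condition_8_11}: compactness of the level sets $\{\cL\le c\}$ (using that $H(\cdot,0)\equiv 0$ and continuity gives a uniform ball $B(0,\varepsilon)\subset\{H\le 1\}$), and existence of an absolutely continuous $\gamma$ attaining equality in Young's inequality, constructed by applying Peano's theorem to $\dot\gamma = H_p(\gamma,\nabla g(\gamma))$. Without these the Legendre-transform form of the rate function is not justified.
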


\begin{remark} \label{remark:singular_rates}
Note that the functions $v_+^i$ and $v_-^i$ do not have to be of the form $v_+^i(x) = \frac{1-x_i}{2} r_+^i(x)$ for some bounded function $r_+^i$. This we call singular behaviour, as such a rate cannot be obtained the large deviation principle for independent particles, Varadhan's lemma and the contraction principle as in \cite{Le95} or \cite{DPdH96}.
\end{remark}

\begin{theorem} \label{theorem:ldp_mean_field_jump_process}
Suppose there exists a continuous function $v : \{1,\dots,d\}\times\{1,\dots,d\} \times E_2 \rightarrow \bR^+$ such that for all $a,b \in \{1,\dots, d\}$, we have
\begin{equation} \label{eqn:property_of_jump_kernels}
\lim_{n \rightarrow \infty} \sup_{\mu \in E_n} \left|\mu(x) r_n(a,b,\mu) - v(a,b,\eta_n(\mu))\right| = 0.
\end{equation}
Suppose that for each $a,b$, the map $\mu \mapsto v(a,b,\mu)$ is either identically equal to zero or satisfies the following two properties.
\begin{enumerate}[(a)]
\item $v(a,b,\mu) > 0$ for all $\mu$ such that $\mu(a) > 0$. 
\item For $\nu$ such that $\nu(a) = 0$, there exists a neighbourhood $U_\nu$ of $\nu$ on which there exists a decomposition $v(a,b,\mu) = v_{\nu,\dagger}(a,b,\mu(a)) v_{\nu,\ddagger}(a,b,\mu)$ such that $v_{\nu,\dagger}$ is increasing in the third coordinate and such that $v_{\nu,\ddagger}(a,b,\cdot)$ is continuous and satisfies $v_{\nu,\ddagger}(a,b,\nu) \neq 0$.
\end{enumerate}
Additionally, suppose that $\{\mu_n(0)\}_{n \geq 1}$ satisfies the large deviation principle on $E_2$ with good rate function $I_0$. Then, $\{\mu_n\}_{n \geq 1}$ satisfies the large deviation principle on $D_{E_2}(\bR^+)$ with good rate function $I$ given by
\begin{equation*}
I(\gamma) = 
\begin{cases}
I_0(\gamma(0)) + \int_0^\infty \cL(\gamma(s),\dot{\gamma}(s)) \dd s & \text{if } \gamma \in \cA\cC \\
\infty & \text{otherwise},
\end{cases}
\end{equation*}
where $\cL : E_2 \times \bR^d \rightarrow \bR^+$ is the Legendre transform of $H : E_2 \times \bR^d \rightarrow \bR$ given by
\begin{equation} \label{eqn:def_Hamiltionian_jump}
H(\mu,p) =  \sum_{a,b} v(a,b,\mu)\left[e^{p_b - p_a} -1\right].
\end{equation}
\end{theorem}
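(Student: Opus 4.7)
The plan is to cast the theorem into the Feng--Kurtz framework (\cite{FK06}) and to verify the ingredients that Sections \ref{section:LDP_via_HJequation} and \ref{section:viscosity_solutions} are designed to supply, following the same template that applies to Theorem \ref{theorem:ldp_ehrenfest_dd}.

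\textbf{Step 1: Identification of the limiting Hamiltonian.} For $f \in C^1(E_2)$ and $\mu \in E_{2,n}$, a direct computation gives
\begin{equation*}
H_n f(\mu) := \frac{1}{n} e^{-nf(\mu)} A_n e^{nf}(\mu) = \sum_{a,b} \mu(a) r_n(a,b,\mu) \left[ e^{n\left(f(\mu - n^{-1}\delta_a + n^{-1}\delta_b) - f(\mu)\right)} - 1\right].
\end{equation*}
Using a Taylor expansion, $n[f(\mu - n^{-1}\delta_a + n^{-1}\delta_b) - f(\mu)] \to \partial_{b}f(\mu) - \partial_{a}f(\mu)$ uniformly for $f \in C^1(E_2)$. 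Combining this with the uniform convergence \eqref{eqn:property_of_jump_kernels} of $\mu(a) r_n(a,b,\mu)$ to $v(a,b,\mu)$, I obtain $H_n f \to H f$ uniformly, with $Hf(\mu)=H(\mu,\nabla f(\mu))$ and $H$ as in \eqref{eqn:def_Hamiltionian_jump}. This is the analogue of \eqref{eqn:convergence_of_H_intro} needed to feed into the abstract machinery.

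\textbf{Step 2: Exponential tightness and the semigroup identification.} Since $E_2$ is compact, exponential tightness of the initial conditions is automatic and exponential tightness of the trajectories $\{\mu_n\}$ follows from the boundedness of the pre-limit Hamiltonians $H_n \mathbf{1}$ on compacts, via the standard criterion invoked in Section \ref{section:LDP_via_HJequation}. Feng--Kurtz then reduces the LDP to showing that $H$ generates a (viscosity) semigroup, which in turn amounts to establishing existence and uniqueness of viscosity solutions of the Hamilton--Jacobi equation \eqref{eqn:Hamilton_Jacobi_equation_intro} for every $h \in C(E_2)$ and $\lambda > 0$. Existence is obtained from the resolvents of the approximating semigroups $e^{t H_n}$; the Lagrangian form of the rate function then drops out of the general representation of the Nisio semigroup once the Legendre transform of $H$ is identified.

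\textbf{Step 3: The comparison principle (the main obstacle).} The real work is the comparison principle for \eqref{eqn:Hamilton_Jacobi_equation_intro}. Away from the boundary of the simplex $E_2$, $H(\mu,p)$ is smooth and convex in $p$, and the classical doubling-of-variables argument with penalty $\frac{n}{2}|\mu - \nu|^2$ works directly. The problem is the boundary $\partial E_2 = \{\mu : \mu(a) = 0 \text{ for some } a\}$: here $v(a,b,\mu)$ vanishes, so $H$ degenerates and the usual uniform modulus estimate $|H(\mu,p) - H(\nu,p)| \leq \omega(|\mu-\nu|(1+|p|))$ fails. This is exactly where hypothesis (b) of the theorem enters. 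In a neighbourhood $U_\nu$ of a boundary point $\nu$ with $\nu(a)=0$, the factorisation $v(a,b,\mu) = v_{\nu,\dagger}(a,b,\mu(a)) v_{\nu,\ddagger}(a,b,\mu)$ together with monotonicity of $v_{\nu,\dagger}$ in $\mu(a)$ is used to show that whenever a sub- and supersolution are tested against each other via the doubling penalty, the contribution of the singular direction has a favourable sign: the increasing factor $v_{\nu,\dagger}$ applied at the two competing points produces a difference of the correct sign that cancels, rather than aggravates, the error from the exponential $e^{p_b - p_a}$. The technical execution of this argument is the content of Section \ref{section:viscosity_solutions} and is the step I expect to be the real difficulty; condition (a) ($v(a,b,\mu) > 0$ when $\mu(a) > 0$) ensures that the singular behaviour is confined to the boundary face $\{\mu(a)=0\}$, so that the interior argument and the boundary argument can be glued together with a partition of unity.

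\textbf{Step 4: Assembly.} Combining convergence of $H_n$ (Step 1), exponential tightness and existence (Step 2), and the comparison principle (Step 3), the abstract theorem of \cite{FK06} recalled in Section \ref{section:LDP_via_HJequation} yields the LDP on $D_{E_2}(\bR^+)$ with good rate function $I(\gamma) = I_0(\gamma(0)) + \int_0^\infty \cL(\gamma(s),\dot{\gamma}(s)) \dd s$ on absolutely continuous paths, and $\infty$ otherwise, with $\cL$ the Legendre transform of $H$ as stated.
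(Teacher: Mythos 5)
Your overall architecture is the same as the paper's: operator convergence $H_n f \to Hf$ (the paper's Lemma \ref{lemma:operator_convergence}), reduction via the Feng--Kurtz machinery to generation of the limiting semigroup (Theorem \ref{theorem_LDP_viscosity_FK}), identification of the rate function in Lagrangian form via the Nisio semigroup (Lemmas \ref{lemma:condition_8_9} and \ref{lemma:condition_8_11}, Theorem \ref{theorem:general_ldp}), and the comparison principle as the main obstacle. Steps 1, 2 and 4 are fine.

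The gap is in Step 3, and it is precisely the point where this theorem differs from the Ehrenfest case. With the standard penalty $\Psi(\mu,\nu)=\tfrac{1}{2}\sum_a(\mu(a)-\nu(a))^2$, the a priori bound of Lemma \ref{lemma:control_on_H} gives
\begin{equation*}
\sup_\alpha v(a,b,\nu_\alpha)\left[e^{\alpha\left(\left(\mu_\alpha(b)-\nu_\alpha(b)\right)-\left(\mu_\alpha(a)-\nu_\alpha(a)\right)\right)}-1\right]<\infty ,
\end{equation*}
and because of the compensating term $\alpha(\mu_\alpha(b)-\nu_\alpha(b))$ in the exponent you cannot conclude, when this exponent blows up near a face $\{\mu(a)=0\}$, that it does so because $\alpha(\nu_\alpha(a)-\mu_\alpha(a))\to\infty$ with $\nu_\alpha(a)>\mu_\alpha(a)$ --- which is exactly the sign information your monotone factor $v_{\nu,\dagger}(a,b,\cdot)$ needs to produce a nonpositive limit. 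The paper resolves this by replacing the penalty with the good distance function $\Psi(\mu,\nu)=\tfrac{1}{2}\sum_a\bigl((\mu(a)-\nu(a))^-\bigr)^2$, for which the analogous bound forces $\alpha(\nu_\alpha(a)-\mu_\alpha(a))^+\to\infty$ whenever the exponent diverges, restoring the one-dimensional argument of Proposition \ref{proposition:comparison_ehrenfest_1d}. Your sketch omits this modification, and your proposed mechanism of gluing an interior and a boundary argument ``with a partition of unity'' is not a viable device in a viscosity comparison proof: sub- and supersolutions cannot be localized that way. What the paper actually does is treat the terms of the sum \eqref{eqn:def_Hamiltionian_jump} one at a time along successively extracted subsequences, showing each contributes at most $0$ to the $\liminf$. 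Without the modified penalty (or an equivalent substitute) your Step 3 does not go through.
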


\subsection{The comparison principle} \label{section:comparison_priniple_main_results}

The main results in this paper are the two large deviation principles as stated above. However, the main step in the proof of these principles is the verification of the comparison principle for a set of Hamilton-Jacobi equations. As this result is of independent interest, we state these results here as well, and leave explanation on why these equation are relevant for the large deviation principles for later. We start with some definitions.

For $E$ equals $E_1$ or $E_2$, let $H : E \times \bR^d \rightarrow \bR$ be some continuous map. For $\lambda > 0$ and $h \in C(E)$ Set $F_{\lambda,h} : E \times \bR \times \bR^d \rightarrow \bR$ by 
\begin{equation*}
F_{\lambda,h}(x,a,p) = a - \lambda H(x,p) - h(x).
\end{equation*}
We will solve the \textit{Hamilton-Jacobi} equation
\begin{equation} \label{eqn:differential_equation_intro}
F_{\lambda,h}(x,f(x),\nabla f(x)) = f(x) - \lambda H(x, \nabla f(x)) - h(x) = 0 \qquad x \in E,
\end{equation}
in the \textit{viscosity} sense.

\begin{definition} \label{definition:viscosity}
We say that $u$ is a \textit{(viscosity) subsolution} of equation \eqref{eqn:differential_equation_intro} if $u$ is bounded, upper semi-continuous and if for every $f \in C^{1}(E)$ and $x \in E$ such that $u - f$ has a maximum at $x$, we have
\begin{equation*}
F_{\lambda,h}(x,u(x),\nabla f(x)) \leq 0.
\end{equation*}
We say that $u$ is a \textit{(viscosity) supersolution} of equation \eqref{eqn:differential_equation_intro} if $u$ is bounded, lower semi-continuous and if for every $f \in C^{1}(E)$ and $x \in E$ such that $u - f$ has a minimum at $x$, we have
\begin{equation*}
F_{\lambda,h}(x,u(x),\nabla f(x)) \geq 0.
\end{equation*}
We say that $u$ is a \textit{(viscosity) solution} of equation \eqref{eqn:differential_equation_intro} if it is both a sub and a super solution.
\end{definition}

\begin{definition} 
We say that equation \eqref{eqn:differential_equation_intro} satisfies the \textit{comparison principle} if for a subsolution $u$ and supersolution $v$ we have $u \leq v$.
\end{definition}

Note that if the comparison principle is satisfied, then a viscosity solution is unique.

\begin{theorem} \label{theorem:comparison_ehrenfest_ddim}
Suppose that $H : E_1 \times \bR^d \rightarrow \bR$ is given by \eqref{theorem:ldp_ehrenfest_dd} and that the family of functions $v_+^i, v_-^i : E_1 \rightarrow \bR^+$, $1 \leq i \leq d$, satisfy the conditions of Theorem \ref{theorem:ldp_ehrenfest_dd}.

Then, for every $\lambda > 0$ and $h \in C(E_1)$, the comparison principle holds for $f(x) - \lambda H(x,\nabla f(x)) - h(x) = 0$.
\end{theorem}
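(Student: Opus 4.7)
The plan is to prove the comparison principle by the standard doubling-of-variables argument of viscosity theory, with the hypotheses of Theorem~\ref{theorem:ldp_ehrenfest_dd} used exactly to tame the exponential growth of $H$ in $p$. Assume toward a contradiction that $u$ is a subsolution and $v$ a supersolution with $\delta := \sup_{E_1}(u - v) > 0$. I would consider $\Phi_\alpha(x,y) := u(x) - v(y) - \frac{\alpha}{2}|x - y|^2$, which by compactness of $E_1$ attains its maximum at some $(x_\alpha, y_\alpha)$; the classical Crandall--Ishii--Lions estimates give $\alpha |x_\alpha - y_\alpha|^2 \to 0$, $\Phi_\alpha(x_\alpha, y_\alpha) \to \delta$, and, along a subsequence, $x_\alpha, y_\alpha \to x^* \in E_1$. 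Testing the sub/supersolution inequalities against the test functions $(\alpha/2)|\cdot - y_\alpha|^2$ and $-(\alpha/2)|x_\alpha - \cdot|^2$, and writing $p_\alpha := \alpha(x_\alpha - y_\alpha)$, one gets
\[
u(x_\alpha) - \lambda H(x_\alpha, p_\alpha) - h(x_\alpha) \leq 0 \leq v(y_\alpha) - \lambda H(y_\alpha, p_\alpha) - h(y_\alpha).
\]
Subtracting and using continuity of $h$, the remaining task is to show $\limsup_\alpha \bigl[H(x_\alpha, p_\alpha) - H(y_\alpha, p_\alpha)\bigr] \leq 0$.

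Thanks to the product form of $H$ in \eqref{eqn:def_Hamiltionian_Ehrenfest}, this difference splits coordinatewise into terms
\[
\bigl(v_+^i(x_\alpha) - v_+^i(y_\alpha)\bigr)(e^{2p_{\alpha,i}}-1) + \bigl(v_-^i(x_\alpha) - v_-^i(y_\alpha)\bigr)(e^{-2p_{\alpha,i}}-1).
\]
For the $v_+^i$ term, if $p_{\alpha,i} \leq 0$ then $|e^{2p_{\alpha,i}} - 1| \leq 1$, so continuity of $v_+^i$ with $|x_\alpha - y_\alpha| \to 0$ forces it to vanish. The real difficulty is when $p_{\alpha,i} > 0$ may be unbounded. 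Here the supersolution inequality at $y_\alpha$, combined with the elementary bound that each summand of $H$ is at least $-\|v_\pm^j\|_\infty$, yields an a~priori estimate $v_+^i(y_\alpha)\, e^{2p_{\alpha,i}} \leq C$ with $C$ depending only on $\|v\|_\infty$, $\|h\|_\infty$, $\lambda$, and $\max_{j,\pm}\|v_\pm^j\|_\infty$. If $x^*_i < 1$, condition~(a) and continuity keep $v_+^i(y_\alpha)$ bounded below by a positive constant, which forces $p_{\alpha,i}$ itself to stay bounded and reduces us to the previous case. If $x^*_i = 1$, I would apply the local factorization $v_+^i = v_{+,z,\dagger}^i(x_i)\, v_{+,z,\ddagger}^i$ near $z = x^*$ from condition~(b): since $v_{+,z,\dagger}^i$ is decreasing and $x_{\alpha,i} > y_{\alpha,i}$,
\[
v_+^i(x_\alpha) \leq v_{+,z,\dagger}^i(y_{\alpha,i})\, v_{+,z,\ddagger}^i(x_\alpha) = v_+^i(y_\alpha) \cdot \frac{v_{+,z,\ddagger}^i(x_\alpha)}{v_{+,z,\ddagger}^i(y_\alpha)},
\]
whence
\[
\bigl(v_+^i(x_\alpha) - v_+^i(y_\alpha)\bigr)\, e^{2p_{\alpha,i}} \leq v_+^i(y_\alpha)\, e^{2p_{\alpha,i}} \cdot \frac{v_{+,z,\ddagger}^i(x_\alpha) - v_{+,z,\ddagger}^i(y_\alpha)}{v_{+,z,\ddagger}^i(y_\alpha)}.
\]
The first factor is uniformly bounded by $C$ and the second vanishes since $v_{+,z,\ddagger}^i$ is continuous with $v_{+,z,\ddagger}^i(x^*) \neq 0$ and $|x_\alpha - y_\alpha| \to 0$. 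The $v_-^i$ contributions are handled symmetrically near $x^*_i = -1$, using the increasing factor instead.

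The hard part is exactly this unbounded-momentum boundary case: the decomposition hypothesis is calibrated precisely so that the a~priori bound $v_+^i(y_\alpha)\, e^{2p_{\alpha,i}} \leq C$ can be transferred to the difference $\bigl(v_+^i(x_\alpha) - v_+^i(y_\alpha)\bigr)\, e^{2p_{\alpha,i}}$ via the monotone factor, while the nonvanishing of the continuous factor at $x^*$ prevents blow-up in the ratio. Without such a structural assumption, $v_+^i$ might vanish at the boundary without any favourable sign comparison between $x_\alpha$ and $y_\alpha$, and the exponential could then spoil the argument. Summing the controlled contributions over all coordinates and both signs yields $\limsup_\alpha \bigl[H(x_\alpha, p_\alpha) - H(y_\alpha, p_\alpha)\bigr] \leq 0$, contradicting $\delta > 0$ and establishing the comparison principle.
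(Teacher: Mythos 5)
Your proposal is correct and follows essentially the same route as the paper: doubling of variables with $\Psi(x,y)=\tfrac12|x-y|^2$, the a priori bound $v_+^i(y_\alpha)e^{2p_{\alpha,i}}\leq C$ extracted from the supersolution inequality at $y_\alpha$ (the paper's Lemma \ref{lemma:control_on_H}), a case split between interior and boundary limit points, and the monotone-times-continuous decomposition to control the unbounded-momentum boundary terms. The only difference is organizational — you bound the $\limsup$ of each coordinate term directly instead of reducing to the one-dimensional Proposition \ref{proposition:comparison_ehrenfest_1d} and extracting nested subsequences — which is a harmless (arguably cleaner) variant of the same argument.
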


\begin{theorem}\label{theorem:comparison_mean_field_jump} 
Suppose that $H : E_2 \times \bR^d \rightarrow \bR$ is given by \eqref{theorem:ldp_ehrenfest_dd} and that function  $v : \{1,\dots,d\}\times\{1,\dots,d\} \times E_2 \rightarrow \bR^+$ satisfies the conditions of Theorem \ref{theorem:ldp_mean_field_jump_process}.

Then, for every $\lambda > 0$ and $h \in C(E_2)$, the comparison principle holds for $f(\mu) - \lambda H(\mu,\nabla f(\mu)) - h(\mu) = 0$.
\end{theorem}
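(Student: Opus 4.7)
The plan is to apply the doubling-variables technique from viscosity-solution theory, adapted to the boundary singularities of $H$ on the simplex $E_2$. Fix a subsolution $u$ and a supersolution $v$ of the equation and suppose for contradiction that $\delta := \sup_{E_2}(u-v) > 0$. For $\alpha > 0$ introduce the penalization
\begin{equation*}
\Phi_\alpha(\mu,\nu) := u(\mu) - v(\nu) - \frac{\alpha}{2}\sum_{a=1}^d \bigl(\mu(a)-\nu(a)\bigr)^2
\end{equation*}
on $E_2 \times E_2$, which attains a maximum at some $(\mu_\alpha,\nu_\alpha)$ by upper/lower semi-continuity on the compact product. The standard Crandall--Ishii penalty lemma shows that, along a subsequence, $\mu_\alpha, \nu_\alpha \to \mu^\star \in E_2$ with $(u-v)(\mu^\star) = \delta$ and $\alpha|\mu_\alpha - \nu_\alpha|^2 \to 0$.

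Using $\mu \mapsto \tfrac{\alpha}{2}|\mu - \nu_\alpha|^2$ as the test function in the subsolution inequality at $\mu_\alpha$ and $\nu \mapsto -\tfrac{\alpha}{2}|\mu_\alpha - \nu|^2$ in the supersolution inequality at $\nu_\alpha$ --- both yielding the common gradient $p_\alpha := \alpha(\mu_\alpha - \nu_\alpha)$ --- and subtracting, I obtain
\begin{equation*}
u(\mu_\alpha) - v(\nu_\alpha) \leq \lambda\bigl[H(\mu_\alpha, p_\alpha) - H(\nu_\alpha, p_\alpha)\bigr] + \bigl[h(\mu_\alpha) - h(\nu_\alpha)\bigr].
\end{equation*}
Since $h$ is continuous and $\mu_\alpha, \nu_\alpha \to \mu^\star$, the last bracket vanishes, and the left-hand side tends to $\delta > 0$; it therefore suffices to establish
\begin{equation*}
\limsup_{\alpha \to \infty}\bigl[H(\mu_\alpha, p_\alpha) - H(\nu_\alpha, p_\alpha)\bigr] \leq 0.
\end{equation*}

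Expanding,
\begin{equation*}
H(\mu_\alpha, p_\alpha) - H(\nu_\alpha, p_\alpha) = \sum_{a,b}\bigl[v(a,b,\mu_\alpha) - v(a,b,\nu_\alpha)\bigr]\bigl[e^{(p_\alpha)_b-(p_\alpha)_a} - 1\bigr],
\end{equation*}
I split the outer sum over $a$ into interior indices $I_+ := \{a : \mu^\star(a) > 0\}$ and boundary indices $I_0 := \{a : \mu^\star(a) = 0\}$. For $a \in I_+$ both $\mu_\alpha(a)$ and $\nu_\alpha(a)$ stay bounded away from zero and $v(a,b,\cdot)$ is continuous on $E_2$, so the coefficient differences tend to zero; the a priori bounds on the Hamiltonian values supplied by the sub- and supersolution inequalities --- together with boundedness of $u, v, h$ --- will be used to argue that the corresponding exponential factors remain bounded in $\alpha$, making the $I_+$-contribution $o(1)$. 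For $a \in I_0$ the local factorization $v(a,b,\mu) = v_{\mu^\star,\dagger}(a,b,\mu(a))\, v_{\mu^\star,\ddagger}(a,b,\mu)$, with $v_{\mu^\star,\dagger}$ increasing in its third argument and vanishing at $\mu(a) = 0$, couples the sign of $v(a,b,\mu_\alpha) - v(a,b,\nu_\alpha)$ to that of $(p_\alpha)_a = \alpha(\mu_\alpha(a) - \nu_\alpha(a))$, which is the mechanism that absorbs the potentially explosive exponential factor.

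The main obstacle is precisely this exponential growth of $H$ in $p$: the penalty $\tfrac{\alpha}{2}|\mu-\nu|^2$ only forces $\alpha|\mu_\alpha - \nu_\alpha|^2 \to 0$ and does not bound $|p_\alpha| = \alpha|\mu_\alpha - \nu_\alpha|$, which may diverge. Overcoming it requires exploiting the strict monotonicity and boundary-vanishing of $v_{\mu^\star,\dagger}$ together with the convexity of $H$ in $p$, in the spirit of the general program of \cite{FK06}. Where the direct monotonicity argument on $I_0$ is not sharp enough, I anticipate introducing an additional small perturbation of the penalty acting on the boundary components --- for instance an entropy-type correction in the coordinates $a \in I_0$, a standard device in viscosity-solution analysis of Hamiltonians with boundary singularities --- which should bring Theorem~\ref{theorem:comparison_mean_field_jump} down to a finite case analysis of the exponential terms.
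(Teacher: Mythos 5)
Your overall strategy (doubling of variables, splitting the sum over pairs $(a,b)$, and using the monotone factorization of $v$ near the boundary) is the right one, but the specific penalty you chose is exactly the one that fails here, and the step you defer --- ``I anticipate introducing an additional small perturbation of the penalty'' --- is precisely the missing idea. With the symmetric quadratic penalty $\frac{\alpha}{2}\sum_a(\mu(a)-\nu(a))^2$, the gradient is $p_\alpha = \alpha(\mu_\alpha-\nu_\alpha)$ and the exponent in each term of $H$ is the \emph{difference} $(p_\alpha)_b-(p_\alpha)_a$. The a priori bound coming from the supersolution inequality then reads
\begin{equation*}
\sup_\alpha v(a,b,\nu_\alpha)\left[e^{\alpha\left((\mu_\alpha(b)-\nu_\alpha(b))-(\mu_\alpha(a)-\nu_\alpha(a))\right)}-1\right]<\infty ,
\end{equation*}
and because of the compensating term $\alpha(\mu_\alpha(b)-\nu_\alpha(b))$ this does \emph{not} tell you that divergence of the exponent is caused by $\alpha(\nu_\alpha(a)-\mu_\alpha(a))\to+\infty$ with $\nu_\alpha(a)>\mu_\alpha(a)$ --- which is the sign information your monotonicity argument on the boundary indices $I_0$ needs. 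Your claim that the factorization ``couples the sign of $v(a,b,\mu_\alpha)-v(a,b,\nu_\alpha)$ to that of $(p_\alpha)_a$'' is therefore not justified with this penalty, and the $I_0$-part of your case analysis does not close.

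The fix in the paper is not an entropy correction but a different good distance function: $\Psi(\mu,\nu)=\frac12\sum_a\bigl((\mu(a)-\nu(a))^-\bigr)^2$, where $x^-=x\wedge 0$. One checks this is still an admissible penalization (it is $C^1$ in each variable, satisfies the antisymmetry of gradients, and $\Psi(\mu,\nu)=0$ forces $\mu=\nu$ because $\sum_a\mu(a)=\sum_a\nu(a)=1$). Its gradient components $\alpha(\mu_\alpha(a)-\nu_\alpha(a))^-$ are all nonpositive, so if the exponent $\alpha\bigl((\mu_\alpha(b)-\nu_\alpha(b))^--(\mu_\alpha(a)-\nu_\alpha(a))^-\bigr)$ diverges to $+\infty$ it can only be because $\alpha(\nu_\alpha(a)-\mu_\alpha(a))\to+\infty$ with $\nu_\alpha(a)>\mu_\alpha(a)$. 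That is exactly the configuration in which the increasing factor $v_{\nu,\dagger}(a,b,\cdot)$ gives $\limsup_\alpha v(a,b,\mu_\alpha)/v(a,b,\nu_\alpha)\le 1$, after which writing the offending term as $\bigl[v(a,b,\mu_\alpha)/v(a,b,\nu_\alpha)-1\bigr]$ times the nonnegative, bounded quantity $v(a,b,\nu_\alpha)\bigl[e^{\cdots}-1\bigr]$ yields the required nonpositive $\liminf$, term by term along nested subsequences. Without this (or an equivalent) modification of the penalty, your proof has a genuine gap at its central step.
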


The main consequence of the comparison principle for the Hamilton-Jacobi equations stems from the fact, as we will see below, that the operator $H$ generates a strongly continuous contraction semigroup on $C(E)$. 

\smallskip

The proof of the large deviation principle is, in a sense, a problem of semigroup convergence. At least for linear semigroups, it is well known that semigroup convergence can be proven via the convergence of their generators. The main issue in this approach is to prove that the limiting generator $H$ generates a semigroup. It is exactly this issue that the comparison principle takes care of.

Hence, the independent interest of the comparison principle comes from the fact that we have semigroup convergence whatever the approximating semigroups are, as long as their generators converge to $H$, i.e. this holds not just for the specifically chosen approximating semigroups that we consider in Section \ref{section:LDP_via_HJequation}.

\subsection{A Lyapunov function for the limiting dynamics} \label{subsection:Htheorems}

As a corollary to the large deviation results, we show how to obtain a Lyapunov function for the solutions of
\begin{equation} \label{eqn:limiting_differential_equation}
\dot{x}(t) = \mathbf{F}(x),
\end{equation}
where $\mathbf{F}(x) := H_p(x,0)$ for a Hamiltonian as in \eqref{eqn:def_Hamiltionian_jump} or \eqref{eqn:def_Hamiltionian_Ehrenfest}. Here $H_p(x,p)$ is interpreted as the vector of partial derivatives of $H$ in the second coordinate.

We will see in Example \ref{example:ehrenfest_non_unique_solution} that the trajectories that solve this differential equation are the trajectories with $0$ cost. Additionally, the limiting operator $(A,C^1(E))$ obtained by
\begin{equation*}
\sup_{x \in E_n \cap K} |A_nf(x) - Af(x)| \rightarrow 0
\end{equation*}
for all $f \in C^1(E)$ and compact sets $K \subseteq E$ has the form by $Af(x) = \ip{\nabla f(x)}{\mathbf{F}(x)}$ for the same vector field $\mathbf{F}$. This implies that the $0$-cost trajectories are solutions to the McKean-Vlasov equation \eqref{eqn:limiting_differential_equation}. Solutions to \ref{eqn:limiting_differential_equation} are not necessarily unique, see Example \ref{example:ehrenfest_non_unique_solution}. Uniqueness holds for example under a one-sided Lipschitz condition: if there exists $M > 0 $ such that $\ip{\mathbf{F}(x) - \mathbf{F}(y)}{x-y} \leq M |x-y|^2$ for all $x,y \in E$.

\smallskip

For non-interacting systems, it is well known that the relative entropy with respect to the stationary measure is a Lyapunov function for solutions of \eqref{eqn:limiting_differential_equation}. The large deviation principle explains this fact and gives a method to obtain a suitable Lyapunov function, also for interacting dynamics.

\begin{proposition} \label{proposition:Htheorem}
Suppose the conditions for Theorem \ref{theorem:ldp_ehrenfest_dd} or Theorem \ref{theorem:ldp_mean_field_jump_process} are satisfied. Suppose there exists measures $\nu_n \in \cP(E_n) \subseteq \cP(E)$ that are invariant for the dynamics generated by $A_n$. Furthermore, suppose that the measures $\nu_n$ satisfy the large deviation principle on $E$ with good rate function $I_0$.

Then $I_0$ is increasing along any solution of $\dot{x}(t) = \mathbf{F}(x(t))$. 
\end{proposition}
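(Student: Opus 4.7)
The plan is to derive the Lyapunov-type inequality for $I_0$ along zero-cost flows by combining three ingredients: solutions of $\dot{x}=\mathbf{F}(x)$ are zero-cost trajectories for $\cL$; the stationarity identity $\nu_n = \nu_n P_n^t$, where $P_n^t$ is the semigroup generated by $A_n$; and Varadhan's lemma applied to the LDP for $\nu_n$ (rate $I_0$) together with the path-space LDP of Theorems \ref{theorem:ldp_ehrenfest_dd} and \ref{theorem:ldp_mean_field_jump_process} (rate $I$). For the first ingredient, inspection of \eqref{eqn:def_Hamiltionian_Ehrenfest} and \eqref{eqn:def_Hamiltionian_jump} shows $H(x,0)=0$, while $\mathbf{F}(x)=H_p(x,0)$ by definition. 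Since $H$ is convex in $p$, Legendre duality gives $\cL(x,\mathbf{F}(x))=\ip{0}{\mathbf{F}(x)}-H(x,0)=0$, whence $\int_0^t\cL(x(s),\dot{x}(s))\dd s=0$ along any solution of the ODE.

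For the remaining ingredients, stationarity of $\nu_n$ gives, for any bounded continuous $g:E\to\bR$ and $t\geq 0$,
\begin{equation*}
\int_E \bE_x\bigl[e^{-n g(x_n(t))}\bigr]\,\nu_n(\dd x) = \int_E e^{-n g(y)}\,\nu_n(\dd y).
\end{equation*}
Taking $-\tfrac1n\log$ of both sides and passing to the limit via Varadhan's lemma --- applied to the LDP for $\nu_n$ with rate $I_0$ on the right, and to the joint LDP for the pair (initial point, path) on the left, where the conditional path satisfies an LDP with rate $\int_0^t\cL\dd s$ --- produces the variational identity
\begin{equation*}
\inf_{y\in E}\{I_0(y)+g(y)\} = \inf_{x\in E}\inf_{\gamma:\,\gamma(0)=x}\Bigl\{I_0(x)+\int_0^t\cL(\gamma(s),\dot\gamma(s))\dd s+g(\gamma(t))\Bigr\}.
\end{equation*}
Testing the right-hand side against the zero-cost trajectory $\gamma=x(\cdot)$ starting at a fixed $x_0$ yields the key relation
\begin{equation*}
\inf_{y\in E}\{I_0(y)+g(y)\}\leq I_0(x_0)+g(x(t)),
\end{equation*}
valid for every bounded continuous $g$.

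To extract a pointwise bound from this, I take a family $g_k(y):=k\,\rho(y,x(t))$ with $\rho$ a metric compatible with the topology of $E$, and let $k\to\infty$. By goodness of $I_0$ and lower semicontinuity, the minimizer of $I_0+g_k$ concentrates at $x(t)$, so $\inf_y\{I_0(y)+g_k(y)\}\to I_0(x(t))$; combined with $g_k(x(t))=0$, one concludes
\begin{equation*}
I_0(x(t)) \leq I_0(x_0),
\end{equation*}
which is the asserted Lyapunov monotonicity of $I_0$ along any solution of $\dot x=\mathbf{F}(x)$.

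The main obstacle is the rigorous justification of Varadhan's lemma in the second step, since it requires the path-space LDP to hold uniformly in the initial condition, in a form compatible with integration against $\nu_n$. This uniformity is delivered by the Hamilton-Jacobi semigroup convergence strategy of Section \ref{section:LDP_via_HJequation}: the nonlinear semigroup $V(t)$ generated by $H$ acts continuously on $C(E)$ and captures precisely the Laplace functional of the conditional path LDP, so the variational identity follows by combining this convergence with the LDP for $\nu_n$ and the invariance relation $\nu_n=\nu_n P_n^t$. The rest of the argument (verifying $\cL(x,\mathbf{F}(x))=0$ and localizing $g$ around $x(t)$) is routine.
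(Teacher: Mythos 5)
Your argument is correct in substance and rests on the same three pillars as the paper's proof: the zero-cost identity $\cL(x,\mathbf{F}(x))=0$ (your Legendre-duality computation, with the supremum attained at $p=0$ by convexity of $H(x,\cdot)$, matches the paper's appeal to Example \ref{example:ehrenfest_non_unique_solution}), the invariance of $\nu_n$, and the path-space LDP. The difference lies in how the last two are combined. The paper simply takes $X_n(0)\sim\nu_n$, so that Theorem \ref{theorem:ldp_ehrenfest_dd} (resp.\ Theorem \ref{theorem:ldp_mean_field_jump_process}) directly yields the path-space LDP with rate $I(\gamma)=I_0(\gamma(0))+\int\cL$, applies the contraction principle to the evaluation map $\gamma\mapsto\gamma(t)$, and equates the resulting rate function with $I_0$ (which governs $X_n(t)$ by stationarity); testing the contracted infimum against the zero-cost solution gives $I_0(x(t))\le I_0(x_0)$ at once. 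You instead pass through Laplace asymptotics: Varadhan's lemma on both sides of the stationarity identity followed by the localization $g_k=k\,\rho(\cdot,x(t))$. This is an equivalent, Laplace-principle formulation of the same comparison of rate functions, and your localization step is sound (only lower semicontinuity of $I_0$ is needed there). The one weak point is the "main obstacle" you flag: the conditional path-space LDP uniform in the initial condition is an artifact of your formulation, not something that actually needs to be proved. Since the hypotheses already place the initial data $\nu_n$ within the scope of the path-space theorems, no conditioning or uniformity is required --- one applies Varadhan (or the contraction principle) to the unconditional law of $X_n(t)$ --- so your hand-wavy appeal to semigroup convergence to supply that uniformity can simply be deleted, after which the argument closes cleanly.
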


Note that we do not assume that  \eqref{eqn:limiting_differential_equation} has a unique solution for a given starting point.

\subsection{Examples} \label{section:examples_main_results}

We give a series of examples to show the extend of Theorems \ref{theorem:ldp_ehrenfest_dd} and \ref{theorem:ldp_mean_field_jump_process}.

For the Ehrenfest model, we start with the basic case, of spins flipping under the influence of some mean field potential. 
\begin{example} \label{example:Ehrenfest_with_potential}
To be precise, fix some continuously differentiable $V : [-1,1]^d \rightarrow \bR$ and set for every $n \geq 1$ and $i \in \{1,\dots,d\}$ the rates
\begin{align*}
r_{n,i}^+(x) & = \exp\left\{- n 2^{-1} \left(V\left(x + \frac{2}{n} e_i\right) - V(x) \right)\right\}, \\
r_{n,i}^-(x) & = \exp\left\{- n 2^{-1} \left(V\left(x - \frac{2}{n} e_i\right) - V(x) \right)\right\}.
\end{align*}
The limiting objects $v_+^i$ and $v_-^i$ are given by
\begin{equation*}
v_+^i(x) = \frac{1-x_i}{2} e^{-\nabla_i V(x)}, \qquad v_-^i(x) = \frac{1+x_i}{2} e^{\nabla_i V(x)},
\end{equation*}
which already have the decomposition as required in the conditions of the Theorem \ref{theorem:ldp_ehrenfest_dd}. For example, condition (b) for $v_+^i$ is satisfied by
\begin{equation*}
v_{+,z,\dagger}^i(x_i) := \frac{1-x_i}{2}, \qquad v_{+,z,\ddagger}^i(x) := e^{-\nabla_i V(x)}.
\end{equation*}
\end{example}

For $d = 1$, we give two extra notable examples, the first one exhibits unbounded jump rates for the individual spins if the empirical magnetisation is close to one of the boundary points. The second example shows a case where we have multiple trajectories $\gamma$ with $I(\gamma) = 0$ that start from $x_0 = 0$.

As $d = 1$, we drop all sub- and super-scripts $i \in \{1,\dots,d\}$ for the these two examples. 

\begin{example} \label{example:ehrenfest_diverging_jump_rates} 
Consider the one-dimensional Ehrenfest model with
\begin{equation*}
r_{n,+}(x) = \frac{2}{\sqrt{1-x}} \wedge n , \qquad r_{n,-}(x) = \frac{2}{\sqrt{1+x}} \wedge n.
\end{equation*}
Set $v_+(x) = \sqrt{1-x}$, $v_-(x) = \sqrt{1+x}$. By Dini's theorem, we have 
\begin{equation*}
\sup_{x \in [-1,1]} \left|\frac{1-x}{2} r_{n,+(x)} - v_+(x)\right| = 0, \qquad \sup_{x \in [-1,1]} \left|\frac{1+x}{2} r_{n,-(x)} - v_-(x)\right| = 0.
\end{equation*}
And additionally, conditions (a) and (b) of Theorem \ref{theorem:ldp_ehrenfest_dd} are satisfied, e.g. take $v_{+,1,\dagger}(x) = \sqrt{1-x}$, $v_{+,1,\ddagger}(x) = 1$. 
\end{example}

\begin{example} \label{example:ehrenfest_non_unique_solution}
Consider the one-dimensional Ehrenfest model with some rates $r_{n,+}$, $r_{n,-}$ and functions $v_+(x) > 0, v_-(x) > 0$ such that $\frac{1}{2}(1-x)r_{n,+}(x) \rightarrow v_+(x)$ and $\frac{1}{2}(1+x)r_{n,-}(x) \rightarrow v_-(x)$ uniformly in $x \in [-1,1]$.

Now suppose that there is a neighbourhood $U$ of $0$ on which $v_+,v_-$ have the form
\begin{equation*}
v_+(x) = \begin{cases}
1+ \sqrt{x} & x \geq 1, \\
1 & x < 1,
\end{cases} \qquad \qquad v_-(x) = 1.
\end{equation*} 
Consider the family of trajectories $t \mapsto \gamma_a(t)$, $a \geq 0$, defined by
\begin{equation*}
\gamma_a(t) := \begin{cases}
0 & \text{for } t \leq a, \\
(t-a)^2 & \text{for } t \geq a.
\end{cases}
\end{equation*}
Let $T > 0$ be small enough such that $\gamma_0(t) \in U$, and hence $\gamma_a(t) \in U$, for all $t \leq T$. A straightforward calculation yields $\int_0^T \cL(\gamma_a(t),\dot{\gamma}_a(t)) \dd t = 0$ for all $a \geq 0$. So we find multiple trajectories starting at $0$ that have zero Lagrangian cost.

Indeed, note that $\cL(x,v) = 0$ is equivalent to $v = H_p(x,0) = 2\left[v_+(x) - v_-(x) \right] = 2\sqrt{(x)}$. This yields that trajectories that have $0$ Lagrangian cost are the trajectories, at least in $U$, that solve
\begin{equation*}
\dot{\gamma}(t) = 2 \sqrt{\gamma(t)}
\end{equation*}
which is the well-known example of a differential equation that allows for multiple solutions.
\end{example}

We end with an example for Theorem \ref{theorem:ldp_mean_field_jump_process} and Proposition \ref{proposition:Htheorem} in the spirit of Example \ref{example:Ehrenfest_with_potential}. 

\begin{example}[Glauber dynamics for the Potts-model] \label{example:Gibbs_dynamics}
Fix some continuously differentiable function $V : \bR^d \rightarrow \bR$. Define the Gibbs measures
\begin{equation*}
\nu_n(\dd \sigma) := \frac{e^{-V(\mu_n(\sigma))}}{Z_n} P^{\otimes,n}(\dd \sigma)
\end{equation*}
on $\{1,\dots,d\}^n$, where $P^{\otimes,n}$ is the $n$-fold product measure of the uniform measure $P$ on $\{1,\dots,d\}$ and where $Z_n$ are normalizing constants.

Let $S(\mu \, | \, P)$ denote the relative entropy of $\mu \in \cP(\{1,\dots,d\})$ with respect to $P$:
\begin{equation*}
S(\mu \, | \, P) = \sum_a \log (d \mu(a)) \mu(a).
\end{equation*}
By Sanov's theorem and Varadhan's lemma, the empirical measures under the laws $\nu_n$ satisfy a large deviation principle with rate function $I_0(\mu) = S(\mu \, | \, P) + V(\mu)$.

Now fix some function $r : \{1,\dots,d\}\times\{1,\dots,d\} \rightarrow \bR^+$. Set
\begin{equation*}
r_n(a,b,\mu) = r(a,b)\exp\left\{- n 2^{-1}\left(V\left(\mu - n^{-1}\delta_a + n^{-1} \delta_b\right) - V(\mu)\right) \right\}.
\end{equation*}
As $n$ goes to infinity, we have uniform convergence of $\mu(a)r_n(a,b,\mu)$ to
\begin{equation*}
v(a,b,\mu) := \mu(a) r(a,b) \exp\left\{\frac{1}{2} \nabla_a V(\mu) - \frac{1}{2} \nabla_b V(\mu) \right\},
\end{equation*}
where $\nabla_a V(\mu)$ is the derivative of $V$ in the $a$-th coordinate. As in Example \ref{example:Ehrenfest_with_potential}, condition (b) of Theorem \ref{theorem:ldp_mean_field_jump_process} is satisfied by using the obvious decomposition.

\smallskip

By Proposition \ref{proposition:Htheorem}, we obtain that $S(\mu \, | \, P) + V(\mu)$ is Lyapunov function for 
\begin{equation*}
\dot{\mu}(a) = \sum_b \left[v(b,a,\mu) - v(a,b,\mu)\right] \qquad a \in \{1,\dots,d\}.
\end{equation*}
\end{example}

\subsection{Discussion and comparison to the existing literature} \label{section:comparison_to_results_in_literature}

We discuss our results in the context of the existing literature that cover our situation. Additionally, we consider a few cases where the large deviation principle(LDP) is proven for diffusion processes, because the proof techniques could possibly be applied in this setting.

\textbf{LDP: Approach via non-interacting systems, Varadhan's lemma and the contraction principle.}
In \cite{Le95,DPdH96,BoSu12}, the first step towards the LDP of the trajectory of some mean-field statistic of $n$ interacting particles is the LDP for non-interacting particles on some large product space obtained via Sanov's theorem. Varadhan's lemma then gives the LDP in this product space for interacting particles, after which the contraction principle gives the LDP on the desired trajectory space. In \cite{Le95,DPdH96}, the set-up is more general compared to ours in the sense that in \cite{Le95} the behaviour of the particles depends on their spatial location, and in \cite{DPdH96} the behaviour of a particle depends on some external random variable.

On the other hand, systems as in Example \ref{example:ehrenfest_diverging_jump_rates} fall outside of the conditions imposed in the three papers, if we disregard spatial dependence or external randomness. 

The approach via Varadhan's lemma, which needs control over the size of the perturbation, does not work, at least naively, for the situation where the jump rate for individual particles is diverging to $\infty$, or converging to $0$, if the mean is close to the boundary, see Remark \ref{remark:singular_rates}.

\smallskip

\textbf{LDP: Explicit control on the probabilities.} 
For another approach considering interacting spins that have a spatial location, see \cite{Com87}. The jump rates are taken to be explicit and the large deviation principle is proven via explicit control on the Radon-Nikodym derivatives. This method should in principle work also in the case of singular $v$. The approach via the generators $H_n$ in this paper, avoids arguments based on explicit control. This is an advantage for processes where the functions $r_n$ and $v$ are not very regular. Also in the classical Freidlin-Wentzell approach \cite{FW98} for dynamical systems with Gaussian noise the explicit form of the Radon-Nikodym derivatives is used to prove the LDP.

\smallskip

\textbf{LDP: Direct comparison to a process of independent particles.}
The main reference concerning large deviations for the trajectory of the empirical mean for interacting diffusion processes on $\bR^d$ is \cite{DG87}. In this paper, the large deviation principle is also first established for non-interacting particles. An explicit rate function is obtained by showing that the desired rate is in between the rate function obtained via Sanov's theorem and the contraction principle and the projective limit approach. The large deviation principle for interacting particles is then obtained via comparing the interacting process with a non-interacting process that has a suitably chosen drift. 
For related approaches, see \cite{Fe94b} for large deviations of interacting jump processes on $\bN$, where the interaction is unbounded and depends on the average location of the particles. See \cite{DjKa95} for mean field jump processes on $\bR^d$. 

Again, the comparison with non-interacting processes would fail in our setting due the singular interaction terms.

\smallskip

\textbf{LDP: Stochastic control.}
A more recent approach using stochastic control and weak convergence methods has proposed in the context of both jump and diffusion processes in \cite{BuDuMa11,BDF12}. A direct application of the results in \cite{BuDuMa11} fails for jump processes in the setting of singular behaviour at the boundary.

\smallskip

\textbf{LDP: Proof via operator convergence and the comparison principle.}
Regarding our approach based on the comparison principle, see \cite[Section 13.3]{FK06}, for an approach based on the comparison principle in the setting of \cite{DG87} and \cite{BDF12}. See \cite{DFL11} for an example of large deviations of a diffusion processes on $(0,\infty)$ with vanishing diffusion term with singular behaviour at the boundary. The methods to prove the comparison principle in Sections 9.2 and 9.3 in \cite{FK06} do not apply in our setting due to the different nature of our Hamiltonians. 

\smallskip

\textbf{LDP: Comparison of the approaches}
The method of obtaining exponential tightness in \cite{FK06}, and thus employed for this paper, is via density of the domain of the limiting generator $(H,\cD(H))$. Like in the theory of weak convergence, functions $f \in \cD(H)$ in the domain of the generator, and functions $f_n \in \cD(H_n)$ that converge to $f$ uniformly, can be used to bound the fluctuations in the Skorokhod space. This method is similar to the approaches taken in \cite{Co89,FW98,DG87}.

The approach using operator convergence is based on a projective limit theorem for the Skorokhod space. As we have exponential tightness on the Skorokhod space, it suffices to prove the large deviation principle for all finite dimensional distributions. This is done the convergence of the logarithmic moment generating functions for the finite dimensional distributions. The Markov property reduces this to the convergence of the  logarithmic moment generating function for time $0$ and convergence of the conditional moment generating functions, that form a semigroup $V_n(t)f(x) = \frac{1}{n} \log \bE[e^{nf(X_n(t))} \, | \, X_n(0) = x]$. Thus, the problem is reduced to proving convergence of semigroups $V_n(t)f \rightarrow V(t)f$. As in the theory of linear semigroups, this comes down to two steps. First one proves convergence of the generators $H_n \rightarrow H$. Then one shows that the limiting semigroup generates a semigroup. The verification of the comparison principle implies that the domain of the limiting operator is sufficiently large to pin down a limiting semigroup. 

This can be compared to the same problem for linear semigroups and the martingale problem. If the domain of a limiting linear generator is too small, multiple solutions to the martingale problem can be found, giving rise to multiple semigroups, see Chapter 12 in \cite{SV79} or Section 4.5 in \cite{EK86}.

The convergence of $V_n(t)f(x) \rightarrow V(t)f(x)$ uniformly in $x$ corresponds to having sufficient control on the Doob-h transforms corresponding to the change of measures
\begin{equation*}
\frac{\dd \PR_{n,x}^{f,t}}{\dd \PR_{n,x}}(X_n) = \exp\left\{nf(X_n(t))\right\},
\end{equation*}
where $\PR_{n,x}$ is the measure corresponding to the process $X_n$ started in $x$ at time $0$. An argument based on the projective limit theorem and control on the Doob h-transforms for independent particles is also used in \cite{DG87}, whereas the methods in \cite{Co89,FW98} are based on direct calculation of the probabilities being close to a target trajectories. 

\smallskip

\textbf{Large deviations for large excursions in large time.}
A notable second area of comparison is the study of large excursions in large time in the context of queuing systems, see e.g. \cite{DuIiSo90,DuEl95,DuAt99} and references therein. Here, it is shown that the rate functions themselves, varying in space and time, are solutions to a Hamilton-Jacobi equation. As in our setting, one of the main problems is the verification of the comparison principle. The notable difficulty in these papers is a discontinuity of the Hamiltonian at the boundary, but in their interior the rates are uniformly bounded away from infinity and zero.

\smallskip

\textbf{Lyapunov functions.}
In \cite{BuDuFiRa15a,BuDuFiRa15b}, Lyapunov functions are obtained for the McKean-Vlasov equation corresponding to interacting Markov processes in a setting similar to the setting of Theorem \ref{theorem:ldp_mean_field_jump_process}. Their discussion goes much beyond Proposition \ref{proposition:Htheorem}, which is perhaps best compared to Theorem 4.3 in \cite{BuDuFiRa15b}. However, the proof of Proposition \ref{proposition:Htheorem} is interesting in its own right, as it gives an intuitive explanation for finding a relative entropy as a Lyapunov functional and is not based on explicit calculations. In particular, the proof of Proposition \ref{proposition:Htheorem} in principle works for any setting where the path-space large deviation principle holds.

\section{Large deviation principle via an associated Hamilton-Jacobi equation} \label{section:LDP_via_HJequation}

In this section, we will summarize the main results of \cite{FK06}. Additionally, we will verify the main conditions of their results, except for the comparison principle of an associated Hamilton-Jacobi equation. This verification needs to be done for each individual model separately and this is the main contribution of this paper. We verify the comparison principle for our two models in Section \ref{section:viscosity_solutions}.

\subsection{Operator convergence}

We start by recalling some results from \cite{FK06}. Let $E_n$ and $E$ denote either of the spaces $E_{n,1}, E_1$ or $E_{n,2}, E_2$. Furthermore, denote by $C(E)$ the continuous functions on $E$ and by $C^1(E)$ the functions that are continuously differentiable on a neighbourhood of $E$ in $\bR^d$.

\smallskip

Assume that for each $n \in \bN$, we have a jump process $X_n$ on $E_n$, generated by a bounded infinitesimal generator $A_n$. For the two examples, this process is either $x_n$ or $\mu_n$. We denote by $\{S_n(t)\}_{t \geq 0}$ the transition semigroups $S_n(t)f(y) = \bE\left[f(X_n(t)) \, \middle| \, X_n(0) = y \right]$ on $C(E_n)$. 
Define for each $n$ the exponential semigroup
\begin{equation*}
V_n(t) f(y) := \frac{1}{n} \log S_n(t) e^{nf}(y) = \frac{1}{n} \log \bE\left[e^{nf(X_n(t))} \, \middle| \, X_n(0) = y \right].
\end{equation*}
Feng and Kurtz\cite{FK06} show that the existence of a strongly continuous limiting semigroup $\{V(t)\}_{t \geq 0}$ on $C(E)$ in the sense that for all $f \in C(E)$ and $T \geq 0$, we have
\begin{equation} \label{eqn:convergence_for_semigroups}
\lim_{n\rightarrow \infty} \sup_{t \leq T} \sup_{x \in E_n} \left| V(t)f(x) - V_n(t)f(x) \right| = 0,
\end{equation}
allows us to study to study the large deviation behaviour of the process $X_n$. We will consider this question from the point of view of the generators $H_n$ of $\{V_n(t)\}_{t \geq 0}$, where $H_n f$ is defined by the norm limit of $t^{-1} (V_n(t)f - f)$ as $t \downarrow 0$. Note that $H_n f = n^{-1} e^{-nf} A_n e^{nf}$, which for our first model yields
\begin{multline*}
H_n f(x) = \sum_{i = 1}^d \Bigg\{  \frac{1-x_i}{2}r_{n,+}^i(x) \left[\exp\left\{n \left(f\left(x + \frac{2}{n}e_i\right) - f(x)\right)\right\} - 1 \right] \\
+ \frac{1+x_i}{2}r_{n,-}^i(x)\left[\exp\left\{ n\left(f\left(x - \frac{2}{n}e_i\right) - f(x)\right)\right\} - 1 \right] \Bigg\}.
\end{multline*} 
For our second model, we have
\begin{equation*}
H_n f(\mu) =  \sum_{a,b = 1}^d \mu(a)r_n(a,b,\mu)\left[\exp\left\{n\left(f\left(\mu - n^{-1}\delta_a + n^{-1} \delta_b\right) - f(\mu)\right)\right\} - 1\right].
\end{equation*}

In particular, Feng and Kurtz show that, as in the theory of weak convergence of Markov processes, the existence of a limiting operator $(H,\cD(H))$, such that for all $f \in \cD(H)$ 
\begin{equation} \label{eqn:convergence_condition_Hamiltonians}
\lim_{n\rightarrow \infty} \sup_{x \in E_n} \left| Hf(x) - H_n f(x) \right| = 0,
\end{equation}
for which one can show that $(H,\cD(H))$ generates a semigroup $\{V(t)\}_{t \geq 0}$ on $C(E)$ via the Crandall-Liggett theorem, \cite{CL71}, then \eqref{eqn:convergence_for_semigroups} holds. 

\begin{lemma} \label{lemma:operator_convergence}
For either of our two models, assuming \eqref{eqn:main_convergence_condition_for_ldp_ehrenfest} or \eqref{eqn:property_of_jump_kernels}, we find that $H_n f \rightarrow Hf$, as in \eqref{eqn:convergence_condition_Hamiltonians} holds for $f \in C^1(E)$, where $Hf$ is given by $Hf(x) := H(x,\nabla f(x))$ and where $H(x,p)$ is defined in \eqref{eqn:def_Hamiltionian_Ehrenfest} or \eqref{eqn:def_Hamiltionian_jump}.
\end{lemma}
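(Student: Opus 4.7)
The plan is to combine three elementary ingredients: uniform convergence of discrete differences to gradients, continuity of the exponential, and the hypothesised uniform convergence of the (regularised) jump rates, assembled by a standard product estimate.

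First I would fix $f \in C^1(E)$ --- which by definition means $f$ is $C^1$ on some neighbourhood $U$ of $E$ --- and note that $\nabla f$ is uniformly continuous on a compact set $K \subseteq U$ that eventually contains all the shifted points $x \pm \tfrac{2}{n}e_i$ (for the Ehrenfest model) or $\mu + \tfrac{1}{n}(\delta_b - \delta_a)$ (for the Glauber-type model). The mean-value theorem then yields, uniformly in $x \in E_n$,
\begin{equation*}
n\left(f\!\left(x \pm \tfrac{2}{n}e_i\right) - f(x)\right) = \pm 2\,\partial_i f(x) + o(1),
\end{equation*}
and similarly $n(f(\mu + \tfrac{1}{n}(\delta_b - \delta_a)) - f(\mu)) = \partial_b f(\mu) - \partial_a f(\mu) + o(1)$. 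Since the left-hand sides are uniformly bounded (by $2\|\nabla f\|_{\infty,K}$, say), continuity of $\exp$ on a compact interval upgrades this to uniform convergence of the exponentiated expressions to $e^{\pm 2\,\partial_i f(x)}$ and $e^{\partial_b f(\mu) - \partial_a f(\mu)}$ respectively.

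Next I would invoke the hypothesised convergence. Assumption \eqref{eqn:main_convergence_condition_for_ldp_ehrenfest} gives uniform convergence of the prefactors $\tfrac{1\mp x_i}{2}\,r^i_{n,\pm}(x)$ to the bounded continuous functions $v^i_\pm(x)$, and \eqref{eqn:property_of_jump_kernels} gives uniform convergence of $\mu(a)\,r_n(a,b,\mu)$ to $v(a,b,\mu)$; in both cases the approximating sequences are eventually uniformly bounded. The exponential brackets $e^{\pm 2\partial_i f} - 1$ and $e^{\partial_b f - \partial_a f} - 1$ are also bounded on $E$. A standard product estimate $|a_n b_n - ab| \le |a_n|\,|b_n - b| + |b|\,|a_n - a|$, applied term by term and summed over the finite index set, then gives
\begin{equation*}
\sup_{x \in E_n}|H_n f(x) - H(x,\nabla f(x))| \longrightarrow 0,
\end{equation*}
with $H$ as in \eqref{eqn:def_Hamiltionian_Ehrenfest} or \eqref{eqn:def_Hamiltionian_jump}, as required.

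There is essentially no obstacle here. The only point needing a moment of care is that the shifted points may lie outside $E$ at the boundary (for instance $x + \tfrac{2}{n}e_i$ when $x_i = 1$), which is precisely why $C^1(E)$ is defined through continuous differentiability on a neighbourhood of $E$. Crucially, the possibly singular behaviour of the individual jump rates $r^i_{n,\pm}$ or $r_n(a,b,\cdot)$ plays no role at this stage: it is their regularised products with $\tfrac{1\mp x_i}{2}$ or $\mu(a)$ that enter, and uniform convergence of these products is exactly the content of the hypotheses. The finer structural assumptions on $v^i_\pm$ and $v(a,b,\cdot)$ near their zeros (monotone decomposition, etc.) are irrelevant to operator convergence; they are needed only for the comparison principle in Section \ref{section:viscosity_solutions}.
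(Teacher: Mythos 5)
Your proof is correct and is precisely the ``straightforward'' argument the paper alludes to without writing out: the paper's proof of Lemma \ref{lemma:operator_convergence} consists of the single remark that it follows from the assumptions and the continuous differentiability of $f$, and your three ingredients (uniform convergence of the scaled differences to the directional derivatives, uniform continuity of $\exp$ on the relevant compact range, and the product estimate combined with the uniform convergence of the regularised rates in \eqref{eqn:main_convergence_condition_for_ldp_ehrenfest} or \eqref{eqn:property_of_jump_kernels}) are exactly what is needed to make it rigorous. Your closing observations --- that $C^1(E)$ is defined on a neighbourhood of $E$ precisely to handle shifted points near the boundary, and that the structural conditions on the zeros of $v$ are irrelevant here --- are both accurate.
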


The proof of the lemma is straightforward using the assumptions and the fact that $f$ is continuously differentiable.

\smallskip

Thus, the problem is reduced to proving that $(H,C^1(E))$ generates a semigroup. The verification of the conditions of the Crandall-Liggett theorem is in general very hard, or even impossible. Two conditions need to be verified, the first is the \textit{dissipativity} of $H$, which can be checked via the positive maximum principle. The second condition is the \textit{range condition}: one needs to show that for $\lambda > 0$, the range of $(\bONE - \lambda H)$ is dense in $C(E)$. In other words, for $\lambda > 0$ and sufficiently many fixed $h \in C(E)$, we need to solve $f - \lambda H f = h$ with $f \in C^1(E)$. An alternative is to solve this equation in the \textit{viscosity sense}. If a viscosity solution exists and is unique, we denote it by $\tilde{R}(\lambda)h$. Using these solutions, we can extend the domain of the operator $(H,C^1(E))$ by adding all pairs of the form $(\tilde{R}(\lambda)h, \lambda^{-1}(\tilde{R}(\lambda)h - h))$ to the graph of $H$ to obtain an operator $\hat{H}$ that satisfies the conditions for the Crandall-Liggett theorem. This is part of the content of Theorem \ref{theorem_LDP_viscosity_FK} stated below. 

As a remark, note that any concept of weak solutions could be used to extend the operator. However, viscosity solutions are special in the sense that the extended operator remains dissipative.

The next result is a direct corollary of Theorem 6.14 in \cite{FK06}.

\begin{theorem} \label{theorem_LDP_viscosity_FK}
For either of our two models, assume that \eqref{eqn:main_convergence_condition_for_ldp_ehrenfest} or \eqref{eqn:property_of_jump_kernels} holds. Additionally, assume that the comparison principle is satisfied for \eqref{eqn:differential_equation_intro} for all $\lambda > 0$ and $h \in C(E)$.

Then, the operator
\begin{equation*}
\hat{H} := \bigcup_{\lambda > 0} \left\{ \left(\tilde{R}(\lambda)h, \lambda^{-1}(\tilde{R}(\lambda)h - h)\right) \, \middle| \, h \in C(E) \right\}
\end{equation*}
generates a semigroup $\{V(t)\}_{t \geq 0}$ as in the Crandall-Liggett theorem and we have \eqref{eqn:convergence_for_semigroups}.

Additionally, suppose that $\{X_n(0)\}$ satisfies the large deviation principle on $E$ with good rate function $I_0$. Then $X_n$ satisfies the large deviation principle on $D_E(\bR^+)$ with good rate function $I$ given by
\begin{equation*}
I(\gamma) = I_0(\gamma(0)) + \sup_{m} \sup_{0 = t_0 < t_1 < \dots < t_m} \sum_{k=1}^m I_{t_k - t_{k-1}}(\gamma(t_k) \, | \, \gamma(t_{k-1})),
\end{equation*}
where $I_s(y \, | \, x) := \sup_{f \in C(E)} f(y) - V(s)f(x)$.
\end{theorem}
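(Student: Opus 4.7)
The plan is to verify the hypotheses of Theorem~6.14 in \cite{FK06}, of which the present result is advertised as a direct corollary. Three ingredients are required: (i) convergence of the nonlinear generators $H_n$ on $C(E_n)$ to a limit operator $H$ on a sufficiently rich domain; (ii) construction of a strongly continuous contraction semigroup $\{V(t)\}_{t\ge 0}$ on $C(E)$ generated by a suitable enlargement of $H$; and (iii) exponential tightness of $\{X_n\}$ in $D_E(\bR^+)$. Ingredient (i) is in hand via Lemma~\ref{lemma:operator_convergence}, which under \eqref{eqn:main_convergence_condition_for_ldp_ehrenfest} or \eqref{eqn:property_of_jump_kernels} gives $\sup_{x\in E_n}|H_n f(x)-Hf(x)|\to 0$ for every $f\in C^1(E)$.

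For (ii), the classical Crandall--Liggett approach would require the range of $(\bONE-\lambda H)$ to be dense in $C(E)$ for $\lambda>0$, which typically fails on $C^1(E)$ because smooth classical solutions of $f-\lambda Hf=h$ need not exist. This is where the comparison principle does its work. For each $\lambda>0$ and $h\in C(E)$, existence of a viscosity solution $\tilde R(\lambda)h$ of \eqref{eqn:differential_equation_intro} can be obtained as a Barles--Perthame half-relaxed limit of the classical bounded-generator resolvents $(\bONE-\lambda H_n)^{-1}h$, whose well-definedness is clear since $A_n$ is bounded; uniqueness of this viscosity solution is precisely the hypothesised comparison principle. The enlarged operator $\hat H$ in the statement then satisfies the range condition by construction, since $(\bONE-\lambda\hat H)\tilde R(\lambda)h=h$, and is dissipative because two applications of the comparison principle yield the contractivity $\|\tilde R(\lambda)h_1-\tilde R(\lambda)h_2\|_\infty\le\|h_1-h_2\|_\infty$. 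Crandall--Liggett then produces the semigroup $\{V(t)\}_{t\ge 0}$, and Theorem~6.14 of \cite{FK06} yields the convergence \eqref{eqn:convergence_for_semigroups}.

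For (iii), compact containment is automatic since $E$ is compact, so the exponential-tightness criterion in \cite[Sec.~4.6]{FK06} reduces to exhibiting, for a dense family $f\in C(E)$, approximants $f_n\in\cD(H_n)$ with $f_n\to f$ uniformly and $\sup_n\sup_{x\in E_n}|H_nf_n(x)|<\infty$. One simply takes $f_n=f\in C^1(E)$: the explicit form of $H_n$ together with the uniform convergence in (i) shows $H_n f$ is uniformly bounded. With (i)--(iii) in place, the Feng--Kurtz projective-limit machinery finishes the argument: exponential tightness reduces the LDP on $D_E(\bR^+)$ to a finite-dimensional LDP, the Markov property reduces the finite-dimensional LDP to convergence of the one-step functionals $V_n(t)f(x)=\frac{1}{n}\log\bE[e^{nf(X_n(t))}\mid X_n(0)=x]$, and \eqref{eqn:convergence_for_semigroups} combined with Bryc's lemma at each time-slice produces exactly the projective-limit rate function displayed. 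The genuine obstacle in this programme is the comparison principle itself, assumed here but whose verification for our two Hamiltonians is the substantive content of Section~\ref{section:viscosity_solutions}; everything else is a mostly mechanical application of the Feng--Kurtz framework.
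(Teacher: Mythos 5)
Your proposal is correct and follows essentially the same route as the paper: the paper's own proof simply cites Theorem~6.14 of \cite{FK06}, checks that $\eta_n$ are the embedding maps, that the $A_n$ are bounded, and invokes Lemma~\ref{lemma:operator_convergence} for generator convergence. You have merely unpacked what the reference to Theorem~6.14 of \cite{FK06} contains — the Barles--Perthame half-relaxed limits giving viscosity sub- and super-solutions (the paper explicitly points to Lemma~6.9 of \cite{FK06} for this), the range condition and dissipativity of $\hat H$, exponential tightness via density of $\cD(H)$, and the projective-limit reduction — all of which is accurate.
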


Note that to prove Theorem 6.14 in \cite{FK06}, one needs to check that viscosity sub- and super-solutions to \eqref{eqn:differential_equation_intro} exist. Feng and Kurtz construct these sub- and super-solutions explicitly, using the approximating operators $H_n$, see the proof of Lemma 6.9 in \cite{FK06}.

\begin{proof}
We check the conditions for Theorem 6.14 in \cite{FK06}. In our models, the maps $\eta_n : E_n \rightarrow E$ are simply the embedding maps. Condition (a) is satisfied as all our generators $A_n$ are bounded. The conditions for convergence of the generators follow by Lemma \ref{lemma:operator_convergence}.
\end{proof}

The additional assumptions in Theorems \ref{theorem:ldp_ehrenfest_dd} and \ref{theorem:ldp_mean_field_jump_process} are there to make sure we are able to verify the comparison principle. This is the major contribution of the paper and will be carried out in Section \ref{section:viscosity_solutions}.

The final steps to obtain Theorems \ref{theorem:ldp_ehrenfest_dd} and \ref{theorem:ldp_mean_field_jump_process} are to obtain the rate function as the integral over a Lagrangian. Also this is based on results in Chapter 8 of \cite{FK06}.

\subsection{Variational semigroups}

In this section, we introduce the \textit{Nisio semigroup} $\mathbf{V}(t)$, of which we will show that it equals $V(t)$ on $C(E)$. This semigroup is given as a variational problem where one optimises a payoff $f(\gamma(t))$ that depends on the state $\gamma(t) \in E$,  but where a cost is paid that depends on the whole trajectory $\{\gamma(s)\}_{0 \leq s \leq t}$. The cost is accumulated over time and is given by a `Lagrangian'. Given the continuous and convex operator $Hf(x) = H(x,\nabla f(x))$, we define this Lagrangian by taking the Legendre-Fenchel transform:
\begin{equation*}
\cL(x,u) := \sup_{p \in \bR^d} \left\{\ip{p}{u} - H(x,p) \right\}.
\end{equation*}
As $p \mapsto H(x,p)$ is convex and continuous, it follows by the Fenchel - Moreau theorem that also
\begin{equation*}
Hf(x) = H(x,\nabla f(x)) = \sup_{u \in \bR^d} \left\{\ip{\nabla f(x)}{u} - \cL(x,u) \right\}.
\end{equation*}
Using $\cL$, we define the Nisio semigroup for measurable functions $f$ on $E$:
\begin{equation} \label{eqn:def_Nisio_semigroup}
\mathbf{V}(t)f(x) = \sup_{\substack{\gamma \in \cA\cC \\ \gamma(0) = x}} f(\gamma(t)) - \int_0^t \cL(\gamma(s),\dot{\gamma}(s)) \dd s.
\end{equation}

To be able to apply the results from Chapter 8 in \cite{FK06}, we need to verify Conditions 8.9 and 8.11 of \cite{FK06}.

\smallskip

For the semigroup to be well behaved, we need to verify Condition 8.9 in \cite{FK06}. In particular, this condition implies Proposition 8.19 in \cite{FK06} that ensures that the Nisio semigroup is in fact a semigroup on the upper semi-continuous functions that are bounded above. Additionally, it implies that all absolutely continuous trajectories up to time $T$, that have uniformly bounded Lagrangian cost, are a compact set in $D_E([0,T])$.

\begin{lemma} \label{lemma:condition_8_9}
For the Hamiltonians in \eqref{eqn:def_Hamiltionian_Ehrenfest} and \eqref{eqn:def_Hamiltionian_jump}, Condition 8.9 in \cite{FK06} is satisfied.
\end{lemma}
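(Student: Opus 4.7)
The plan is to verify each clause of Condition 8.9 in \cite{FK06} directly from the explicit forms of the Hamiltonians. I focus on the Ehrenfest case \eqref{eqn:def_Hamiltionian_Ehrenfest}; the jump case \eqref{eqn:def_Hamiltionian_jump} is handled identically after replacing the exponents $\pm 2 p_i$ by $p_b - p_a$ and the sum over $i$ by the sum over pairs $(a,b)$.

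First I would check the basic structural properties. Continuity of $H$ on $E \times \bR^d$ is immediate from continuity of the rate coefficients $v_+^i, v_-^i$ on the compact state space together with smoothness of the exponentials. For each fixed $x$, the map $p \mapsto H(x,p)$ is a nonnegative linear combination of the convex exponentials $p \mapsto e^{\pm 2 p_i}$, hence convex; and $H(x,0)=0$ because each bracket $e^0 - 1$ vanishes. Since $E$ is compact and $v_\pm^i$ are bounded, $H$ is also bounded on $E \times K$ for any compact $K \subseteq \bR^d$, and the partial derivatives $H_p(x,p)$ exist and are continuous, which is what one needs to cast $H$ as an operator on $C^1(E)$ taking values in $C(E)$ in the Feng-Kurtz framework.

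Next I would establish the coercivity/compact-containment part of the condition for the Lagrangian $\cL(x,v) = \sup_p \{\ip{p}{v} - H(x,p)\}$. From $H(x,0) = 0$ and convexity, $\cL(x,v) \geq 0$ with $\cL(x,0) = 0$. Away from the boundary, at least one of $v_+^i(x), v_-^i(x)$ is strictly positive for each $i$, so $H(x,\cdot)$ is superlinear in each $p_i$ direction and hence $\cL(x,\cdot)$ has compact sublevel sets with a modulus depending on $\min_i (v_+^i(x) \wedge v_-^i(x))$; by continuity this modulus is uniform on any compact subset of the interior.

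The main obstacle is the boundary behaviour. At a point $z$ with, say, $z_i = 1$ and $v_+^i(z) = 0$, the $p_i$-dependence of $H(z,\cdot)$ is only $v_-^i(z)[e^{-2 p_i} - 1]$, which is coercive as $p_i \to +\infty$ but not as $p_i \to -\infty$; dually, $\cL(z, \cdot)$ is $+\infty$ for $v_i > 0$ and finite, superlinear for $v_i \leq 0$. This is in fact the correct behaviour (one cannot leave the boundary at zero cost in a forbidden direction), so the required compact containment holds direction-by-direction. To convert this into a uniform statement on neighbourhoods of $z$, I would use the decomposition $v_+^i(x) = v_{+,z,\dagger}^i(x_i) v_{+,z,\ddagger}^i(x)$ from condition (b) of Theorem \ref{theorem:ldp_ehrenfest_dd}: since $v_{+,z,\ddagger}^i(z) \neq 0$ and $v_{+,z,\ddagger}^i$ is continuous, there is a neighbourhood of $z$ on which $v_+^i$ is bounded below by $c \cdot v_{+,z,\dagger}^i(x_i)$, and combining this with the positivity of $v_-^i$ in the same region yields a uniform modulus of coercivity for $\cL$ with the expected boundary degeneracy. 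A finite cover of $E_1$ by such neighbourhoods (together with the interior estimate) then gives the global compact-containment statement required for the sublevel sets $\{(x,v) : \cL(x,v) \leq c\}$. This, combined with the structural properties of the first step, verifies Condition 8.9 and so yields both the semigroup property of the Nisio semigroup $\mathbf{V}(t)$ on bounded upper semicontinuous functions and the asserted compactness of trajectories with uniformly bounded Lagrangian cost in $D_E([0,T])$.
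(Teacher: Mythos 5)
There is a genuine gap here, on two levels. First, Condition 8.9 of \cite{FK06} consists of several clauses, and your proposal only engages seriously with the one about lower semicontinuity and compactness of the sublevel sets of $\cL$. The paper must also (i) verify the control-structure clause, which requires reducing the relaxed-control formulation of the Nisio semigroup (measures $\lambda(\dd v,\dd s)$ on $\bR^d\times\bR^+$) to deterministic absolutely continuous trajectories --- this is done via convexity of $\cL(x,\cdot)$ and Jensen's inequality and is what justifies the definition \eqref{eqn:def_Nisio_semigroup}; and (ii) verify clause (5), which the paper handles by reference to Lemma 10.21 of \cite{FK06}. Neither appears in your argument, so as written the lemma is not proved.

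Second, for the sublevel-set clause you do address, your route is both more complicated than necessary and shaky at the points where it departs from routine. The paper avoids all boundary analysis with one duality trick: set $\cN=\bigcap_{x\in E}\{p: H(x,p)\le 1\}$; by compactness of $E$, continuity of $H$ and $H(x,0)=0$ this set contains a ball $B(0,\epsilon)$, and then the Fenchel inequality $\ip{p}{v}\le \cL(x,v)+H(x,p)\le c+1$ for all $p\in B(0,\epsilon)$ bounds $|v|$ by $(c+1)/\epsilon$ uniformly in $x$. No coercivity of $H$ in $p$ is needed, and the decomposition (b) of Theorem \ref{theorem:ldp_ehrenfest_dd} plays no role here (it is used only for the comparison principle). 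By contrast, your interior estimate rests on the claim that away from the boundary at least one of $v_+^i(x), v_-^i(x)$ is strictly positive for each $i$, which fails when both are identically zero --- a case the theorems explicitly allow; your boundary discussion has the coercivity direction reversed (the term $v_-^i(z)[e^{-2p_i}-1]$ blows up as $p_i\to-\infty$, not $+\infty$, though your conclusion that $\cL(z,v)=+\infty$ for $v_i>0$ is right); and the decisive step, passing from pointwise statements to a ``uniform modulus of coercivity'' and then to global compact containment via a finite cover, is asserted rather than proved. I would replace the entire second half of your argument with the uniform small-ball duality estimate above.
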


\begin{proof}
For (1),take $U = \bR^d$ and set $Af(x,v) = \ip{\nabla f(x)}{v}$. Considering Definition 8.1 in \cite{FK06}, if $\gamma \in \cA\cC$, then
\begin{equation*}
f(\gamma(t)) - f(\gamma(0)) = \int_0^t Af(\gamma(s),\dot{\gamma}(s)) \dd s
\end{equation*}
by definition of $A$. In Definition 8.1, however, relaxed controls are considered, i.e. instead of a fixed speed $\dot{\gamma}(s)$, one considers a measure $\lambda \in \cM(\bR^d \times \bR^+)$, such that $\lambda(\bR^d \times [0,t]) = t$ for all $ t \geq 0$ and
\begin{equation*}
f(\gamma(t)) - f(\gamma(0)) = \int_0^t Af(\gamma(s),v) \lambda(\dd v, \dd s).
\end{equation*}
These relaxed controls are then used to define the Nisio semigroup in equation (8.10). Note however, that by convexity of $H$ in the second coordinate, also $\cL$ is convex in the second coordinate. It follows that a deterministic control $\lambda(\dd v, \dd t) = \delta_{v(t)}(\dd v) \dd t$ is always the control with the smallest cost by Jensen's inequality. We conclude that we can restrict the definition (8.10) to curves in $\cA\cC$. This motivates our changed definition in equation \eqref{eqn:def_Nisio_semigroup}. 

For this paper, it suffices to set $\Gamma = E \times \bR^d$, so that (2) is satisfied. By compactness of $E$, (4) is clear. 

\smallskip

We are left to prove (3) and (5). For (3), note that $\cL$ is lower semi-continuous by construction. We also have to prove compactness of the level sets. By lower semi-continuity, it is suffices to show that the level sets $\{\cL \leq c\}$ are contained in a compact set.

Set $\cN := \cap_{x \in E} \left\{p \in \bR^d \, \middle| \, H(x,p) \leq 1 \right\}$. First, we show that $\cN$ has non-empty interior, i.e. there is some $\varepsilon > 0$ such that the open ball $B(0,\varepsilon)$ of radius $\varepsilon$ around $0$ is contained in $\cN$. Suppose not, then there exists $x_n$ and $p_n$ such that $p_n \rightarrow 0$ and for all $n$: $H(x_n,p_n) \in 1$. By compactness of $E$ and continuity of $H$, we find a value $H(x,0) = 1$, which contradicts our definitions of $H$, where $H(y,0) = 0$ for all $y \in E$.

Let $(x,v) \in \{\cL\leq c\}$, then
\begin{equation*}
\ip{p}{v} \leq \cL(x,v) + H(x,p) \leq c + 1
\end{equation*}
for all $p \in B(0,\varepsilon) \subseteq \cN$. It follows that $v$ is contained in some bounded ball in $\bR^d$. It follows that $\{\cL \leq c\}$ is contained in some compact set by the Heine-Borel theorem.

\smallskip

Finally, (5) can be proven as Lemma 10.21 in \cite{FK06} or Lemma 5.19 in \cite{Kr14a}
\end{proof}

The last property necessary for the equality of $V(t)f$ and $\mathbf{V}(t)f$ on $C(E)$ is the verification of Condition 8.11 in \cite{FK06}. This condition is key to proving that a variational resolvent, see equation (8.22), is a viscosity super-solution to \eqref{eqn:differential_equation_intro}. As the variational resolvent is also a sub-solution to \eqref{eqn:differential_equation_intro} by Young's inequality, the variational resolvent is a viscosity solution to this equation. If viscosity solutions are unique, this yields, after an approximation argument that $V(t) = \mathbf{V}(t)$. 

\begin{lemma} \label{lemma:condition_8_11}
Condition 8.11 in \cite{FK06} is satisfied. In other words, for all $g \in C^{1}(E)$ and $x_0 \in E$, there exists a trajectory $\gamma \in \cA\cC$ such that $\gamma(0) = x_0$ and for all $T \geq 0$:
\begin{equation} \label{eqn:lemma_optimising_trajectories}
\int_0^T Hg(\gamma(t)) \dd t = \int_0^T \ip{\nabla g(\gamma(t))}{\dot{\gamma}(t)} - \cL(\gamma(t),\dot{\gamma}(t)) \dd t.
\end{equation}
\end{lemma}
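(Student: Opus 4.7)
The plan is to construct $\gamma$ explicitly as a solution of the ODE obtained by choosing, at each point, the optimal velocity in the Legendre transform, and then to invoke convex duality to obtain the pointwise identity that integrates to \eqref{eqn:lemma_optimising_trajectories}.

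\textbf{Step 1 (vector field).} Define $\mathbf{F}_g: E \to \bR^d$ by $\mathbf{F}_g(x) := H_p(x,\nabla g(x))$, where $H_p$ is the gradient of $H(x,\cdot)$ in the momentum variable. For both Hamiltonians \eqref{eqn:def_Hamiltionian_Ehrenfest} and \eqref{eqn:def_Hamiltionian_jump}, the map $p \mapsto H(x,p)$ is $C^\infty$ and (strictly) convex because it is a sum of exponentials of linear functions in $p$ with non-negative coefficients. Since $\nabla g$ is continuous on $E$ and $v_\pm^i$, $v$ are continuous, $\mathbf{F}_g$ is continuous on $E$.

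\textbf{Step 2 (invariance of $E$).} The key observation is that the structural assumptions on $v_\pm^i$ and $v(a,b,\cdot)$ force $\mathbf{F}_g$ to point inward (or be tangent) on $\partial E$, so that Nagumo's sub-tangential condition holds. For the Ehrenfest case, the $i$-th component is
\begin{equation*}
(\mathbf{F}_g)_i(x) = 2 v_+^i(x) e^{2\partial_i g(x)} - 2 v_-^i(x) e^{-2 \partial_i g(x)},
\end{equation*}
and the conditions of Theorem \ref{theorem:ldp_ehrenfest_dd} give $v_+^i(x) = 0$ whenever $x_i = 1$, so $(\mathbf{F}_g)_i \leq 0$ there; similarly $(\mathbf{F}_g)_i \geq 0$ at $x_i = -1$. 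For the Glauber case,
\begin{equation*}
(\mathbf{F}_g)_c(\mu) = -\sum_b v(c,b,\mu) e^{\partial_b g - \partial_c g} + \sum_a v(a,c,\mu) e^{\partial_c g - \partial_a g},
\end{equation*}
which sums to $0$ in $c$ (so $\mathbf{F}_g$ is tangent to the simplex), and when $\mu(c) = 0$ the first sum vanishes, leaving $(\mathbf{F}_g)_c(\mu) \geq 0$. Thus $\mathbf{F}_g$ is tangent to the simplex and inward at every face. By the Nagumo--Peano theorem for continuous vector fields on compact convex sets with a sub-tangential condition, there exists an absolutely continuous $\gamma: [0,\infty) \to E$ with $\gamma(0) = x_0$ and $\dot\gamma(t) = \mathbf{F}_g(\gamma(t))$ for almost every $t$.

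\textbf{Step 3 (pointwise Legendre identity).} For any $(x,p)$, the smoothness and convexity of $p \mapsto H(x,p)$ imply by Fenchel duality that $\cL(x, v) = \ip{p}{v} - H(x,p)$ if and only if $v = H_p(x,p)$. Applied with $p = \nabla g(\gamma(t))$ and $v = \dot\gamma(t) = H_p(\gamma(t), \nabla g(\gamma(t)))$, this gives, for a.e.\ $t$,
\begin{equation*}
\cL(\gamma(t), \dot\gamma(t)) = \ip{\nabla g(\gamma(t))}{\dot\gamma(t)} - H(\gamma(t), \nabla g(\gamma(t))) = \ip{\nabla g(\gamma(t))}{\dot\gamma(t)} - Hg(\gamma(t)).
\end{equation*}
Integrating from $0$ to $T$ yields \eqref{eqn:lemma_optimising_trajectories} for every $T \geq 0$.

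\textbf{Main obstacle.} The only non-trivial point is the existence of a trajectory that remains in $E$, i.e.\ Step 2. This is precisely where the boundary conditions imposed in Theorems \ref{theorem:ldp_ehrenfest_dd} and \ref{theorem:ldp_mean_field_jump_process}---that the rates $v_+^i$, $v_-^i$, respectively $v(a,b,\mu)$, vanish when the corresponding coordinate saturates---are used: they guarantee inward-pointing behaviour of $\mathbf{F}_g$ and hence the Nagumo condition. Once the invariance of $E$ is established, Steps 1 and 3 are direct consequences of the smoothness and convexity of $H$ in $p$ together with Fenchel duality.
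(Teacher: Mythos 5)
Your proposal is correct and follows essentially the same route as the paper: integrate the vector field $\mathbf{F}^g(x) = H_p(x,\nabla g(x))$ to get a trajectory in $E$, and then use that the supremum defining $\cL(x,H_p(x,p))$ is attained at $p$, so that the pointwise Legendre identity integrates to \eqref{eqn:lemma_optimising_trajectories}. The only difference is that you explicitly verify the inward-pointing (Nagumo sub-tangential) condition on $\partial E$, whereas the paper delegates the existence of an $E$-valued solution to an extended Peano theorem of Crandall and obtains global existence by iterating the local construction; your extra step spells out exactly where the boundary hypotheses on $v_\pm^i$ and $v(a,b,\cdot)$ enter, which is a clarification rather than a divergence.
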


\begin{proof}
Fix $T > 0$, $g \in C^{1}(E)$ and $x_0 \in E$. We introduce a vector field $\mathbf{F}^g : E \rightarrow \bR^d$, by
\begin{align*}
\mathbf{F}^g(x) : = H_p(x,\nabla g(x)),
\end{align*}
where $H_p(x,p)$ is the vector of partial derivatives of $H$ in the second coordinate. Note that in our examples, $H$ is continuously differentiable in the $p$-coordinates. For example, for the $d=1$ case of Theorem \ref{theorem:ldp_ehrenfest_dd}, we obtain
\begin{equation*}
\mathbf{F}^g(x) : = 2v_+(x)e^{2\nabla g(x)} - 2v_-(x)e^{-2\nabla g(x)}.
\end{equation*}
As $\mathbf{F}^g$ is a continuous vector field, we can find a local solution $\gamma^g(t)$ in $E$ to the differential equation
\begin{equation*}
\begin{cases}
\dot{\gamma}(t) = \mathbf{F}^g(\gamma(t)), \\
\gamma(0) = x_0,
\end{cases}
\end{equation*}
by an extended version of Peano's theorem \cite{Cr72}. The result in \cite{Cr72} is local, however, the length of the interval on which the solution is constructed depends inversely on the norm of the vector field, see his equation (2). As our vector fields are globally bounded in size, we can iterate the construction in \cite{Cr72} to obtain a global existence result, such that $\dot{\gamma}^g(t) = \mathbf{F}^g(\gamma(t))$ for almost all times in $[0,\infty)$.

We conclude that on a subset of full measure of $[0,T]$ that
\begin{align*}
\cL(\gamma^g(t),\dot{\gamma}^g(t)) & = \cL(\gamma^g(t),\mathbf{F}^g(\gamma^g(t))) \\
& = \sup_{p \in \bR^d} \ip{p}{\mathbf{F}^g(\gamma^g(t))} - H(\gamma^g(t),p) \\
& = \sup_{p \in \bR^d} \ip{p}{H_p(\gamma^g(t), \nabla g(\gamma^g(t)))} - H(\gamma^g(t),p).
\end{align*}
By differentiating the final expression with respect to $p$, we find that the supremum is taken for $p = \nabla g(\gamma^g(t))$. In other words, we find
\begin{align*}
\cL(\gamma^g(t),\dot{\gamma}^g(t)) & = \ip{\nabla g(\gamma^g(t))}{H_p(\gamma^g(t), \nabla g(\gamma^g(t)))} - H(\gamma^g(t),\nabla g(\gamma^g(t))) \\
& = \ip{\nabla g(\gamma^g(t))}{\dot{\gamma}^g(t)} - Hg(\gamma^g(t)).
\end{align*}
By integrating over time, the zero set does not contribute to the integral, we find \eqref{eqn:lemma_optimising_trajectories}.
\end{proof}

The following result follows from Corollary 8.29 in \cite{FK06}.

\begin{theorem} \label{theorem:general_ldp}
For either of our two models, assume that \eqref{eqn:main_convergence_condition_for_ldp_ehrenfest} or \eqref{eqn:property_of_jump_kernels} holds. Assume that the comparison principle is satisfied for \eqref{eqn:differential_equation_intro} for all $\lambda > 0$ and $h \in C(E)$. Finally, suppose that $\{X_n(0)\}$ satisfies the large deviation principle on $E$ with good rate function $I_0$.

Then, we have $V(t)f = \mathbf{V}(t)f$ for all $f \in C(E)$ and $t \geq 0$. Also, $X_n$ satisfies the large deviation principle on $D_E(\bR^+)$ with good rate function $I$ given by
\begin{equation*}
I(\gamma) := \begin{cases}
I_0(\gamma(0)) + \int_0^\infty \cL(\gamma(s),\dot{\gamma}(s)) \dd s & \text{if } \gamma \in \cA\cC, \\
\infty & \text{if } \gamma \notin \cA\cC.
\end{cases}
\end{equation*}
\end{theorem}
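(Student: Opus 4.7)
The plan is to invoke Theorem~\ref{theorem_LDP_viscosity_FK} to obtain the large deviation principle with the projective-limit rate function
\begin{equation*}
I(\gamma) = I_0(\gamma(0)) + \sup_m \sup_{0 = t_0 < \dots < t_m} \sum_{k=1}^m I_{t_k - t_{k-1}}(\gamma(t_k) \, | \, \gamma(t_{k-1})),
\end{equation*}
where $I_s(y \, | \, x) = \sup_{f \in C(E)} f(y) - V(s)f(x)$. The remaining work is to identify $V(t)$ with the Nisio semigroup $\mathbf{V}(t)$ on $C(E)$, after which the Lagrangian form of $I$ follows by a standard Legendre-Fenchel computation and an application of Corollary~8.29 of \cite{FK06}.

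I would first work at the resolvent level. Associated to $\mathbf{V}(t)$, consider the variational resolvent
\begin{equation*}
\mathbf{R}(\lambda)h(x) := \sup_{\substack{\gamma \in \cA\cC \\ \gamma(0) = x}} \int_0^\infty \lambda^{-1} e^{-t/\lambda}\left[h(\gamma(t)) - \int_0^t \cL(\gamma(s),\dot{\gamma}(s)) \dd s \right] \dd t
\end{equation*}
and show that it is a viscosity solution of $f - \lambda H f = h$. The subsolution property follows from Young's inequality $\ip{p}{v} \leq \cL(x,v) + H(x,p)$ after a short-time Taylor expansion against a smooth test function touching from above. The supersolution property is subtler: for a test function $g$ touching $\mathbf{R}(\lambda)h$ from below at $x_0$, Lemma~\ref{lemma:condition_8_11} produces an absolutely continuous curve $\gamma$ starting from $x_0$ along which $\cL(\gamma(t),\dot{\gamma}(t)) = \ip{\nabla g(\gamma(t))}{\dot{\gamma}(t)} - Hg(\gamma(t))$ for almost every $t$, and integrating this identity against $\lambda^{-1}e^{-t/\lambda}$ yields the required supersolution inequality.

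The comparison principle in the hypothesis then forces $\mathbf{R}(\lambda)h = \tilde{R}(\lambda)h$ for every $\lambda > 0$ and $h \in C(E)$, and the Crandall-Liggett exponential formula promotes this to $V(t)f = \mathbf{V}(t)f$ on $C(E)$ for all $t \geq 0$. The compactness of trajectories of bounded Lagrangian cost supplied by Lemma~\ref{lemma:condition_8_9} ensures both that $\mathbf{V}(t)$ is a well-defined semigroup on $C(E)$ and that the Legendre-Fenchel dual
\begin{equation*}
I_s(y \, | \, x) = \inf\left\{\int_0^s \cL(\gamma(t),\dot{\gamma}(t)) \dd t \, \middle| \, \gamma \in \cA\cC, \, \gamma(0) = x, \, \gamma(s) = y \right\}
\end{equation*}
is attained. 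Refining the partition in the projective-limit formula then recovers $\int_0^\infty \cL(\gamma(s),\dot{\gamma}(s)) \dd s$, while the same compactness forces $I(\gamma) = \infty$ off $\cA\cC$.

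The main obstacle I expect is the supersolution step for $\mathbf{R}(\lambda)h$: this is where the special choice of $\mathbf{V}(t)$ as the candidate for $V(t)$ is actually used, and it requires combining Lemma~\ref{lemma:condition_8_11} with a chain rule along the produced absolutely continuous curve and a careful treatment of the exponential weight. Everything else is Young's inequality, standard viscosity-solution manipulations, or a direct appeal to \cite[Chapter~8]{FK06}.
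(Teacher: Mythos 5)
Your proposal takes essentially the same route as the paper: it reduces to checking the hypotheses of Corollary~8.29 in \cite{FK06}, with Lemmas~\ref{lemma:condition_8_9} and~\ref{lemma:condition_8_11} supplying Conditions~8.9--8.11 and Theorem~\ref{theorem_LDP_viscosity_FK} supplying the conditions of Theorem~6.14. The variational-resolvent and Young's-inequality mechanics you spell out are precisely the content of \cite[Chapter~8]{FK06} that the paper summarizes in the discussion preceding Lemma~\ref{lemma:condition_8_11}, so your argument is a more unpacked version of the same proof rather than a different one.
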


\begin{proof}
We check the conditions for Corollary 8.29 in \cite{FK06}. Note that in our setting $H = \mathbf{H}$. Therefore, condition (a) of Corollary 8.29 is trivially satisfied. Furthermore, we have to check the conditions for Theorems 6.14 and 8.27. For the first theorem, these conditions were checkel already in the proof of our Theorem \ref{theorem_LDP_viscosity_FK}. For Theorem 8.27, we need to check Conditions 8.9, 8.10 and 8.11 in \cite{FK06}. As $H1 = 0$, Condition 8.10 follows from 8.11. 8.9 and 8.11 have been verified in Lemmas \ref{lemma:condition_8_9} and \ref{lemma:condition_8_11}.
\end{proof}

The last theorem shows us that we have Theorems \ref{theorem:ldp_ehrenfest_dd} and \ref{theorem:ldp_mean_field_jump_process} if we can verify the comparison principle, i.e. Theorems \ref{theorem:comparison_ehrenfest_ddim} and \ref{theorem:comparison_mean_field_jump}. This will be done in the section below.

\begin{proof}[Proof of Theorems \ref{theorem:ldp_ehrenfest_dd} and \ref{theorem:ldp_mean_field_jump_process}]
The comparison principles for equation \eqref{eqn:differential_equation_intro} are verified in Theorems \ref{theorem:comparison_ehrenfest_ddim} and \ref{theorem:comparison_mean_field_jump}. The two theorems now follow from Theorem \ref{theorem:general_ldp}.
\end{proof}

\begin{proof}[Proof of Proposition \ref{proposition:Htheorem}]
We give the proof for the system considered in Theorem \ref{theorem:ldp_ehrenfest_dd}. Fix $t \geq 0$ and some some starting point $x_0$. Let $x(t)$ be any solution of $\dot{x}(t) = \mathbf{F}(x(t))$ with $x(0) = x_0$. We show that $I_0(x(t)) \leq I_0(x_0)$.

\smallskip

Let $X_n(0)$ be distributed as $\nu_n$. Then it follows by Theorem \ref{theorem:ldp_ehrenfest_dd} that the large deviation principle holds for $\{X_n\}_{n \geq 0}$ on $D_E(\bR^+)$. 

\smallskip

As $\nu_n$ is invariant for the Markov process generated by $A_n$, also the sequence $\{X_n(t)\}_{n \geq 0}$ satisfies the large deviation principle on $E$ with good rate function $I_0$. Combining these two facts, the Contraction principle\cite[Theorem 4.2.1]{DZ98} yields
\begin{multline*}
I_0(x(t)) = \inf_{\gamma \in \cA\cC: \gamma(t) = x(t)} I_0(\gamma(0)) + \int_0^t \cL(\gamma(s),\dot{\gamma}(s)) \dd s \\
\leq I_0(x(0)) + \int_0^t \cL(x(s),\dot{x}(s)) \dd s  = I_0(x(0)).
\end{multline*}
Note that $\cL(x(s),\dot{x}(s)) = 0$ for all $s$ as was shown in Example \ref{example:ehrenfest_non_unique_solution}.
\end{proof}

\section{The comparison principle} \label{section:viscosity_solutions}

We proceed with checking the comparison principle for equations of the type $f(x) - \lambda B(x,\nabla f(x)) - h(x) = 0$. In other words, for subsolutions $u$ and supersolutions $v$ we need to check that $u \leq v$. We start with some known results. First of all, we give the main tool to construct sequences $x_\alpha$ and $y_\alpha$ that converge to a maximising point $z \in E$ such that $u(z) - v(z) = \sup_{z'\in E} u(z') - v(z')$. This result can be found for example as Proposition 3.7 in \cite{CIL92}.

\begin{lemma}\label{lemma:doubling_lemma}
Let $E$ be a compact subset of $\bR^d$, let $u$ be upper semi-continuous, $v$ lower semi-continuous and let $\Psi : E^2 \rightarrow \bR^+$ be a lower semi-continuous function such that $\Psi(x,y) = 0$ implies $x = y$. For $\alpha > 0$, let $x_\alpha,y_\alpha \in E$ such that
\begin{equation*}
u(x_\alpha) - v(y_\alpha) - \alpha \Psi(x_\alpha,y_\alpha) = \sup_{x,y \in E} \left\{u(x) - v(y) - \alpha \Psi(x,y) \right\}.
\end{equation*}
Then the following hold
\begin{enumerate}[(i)]
\item  $\lim_{\alpha \rightarrow \infty} \alpha \Psi(x_\alpha,y_\alpha) = 0$.
\item All limit points of $(x_\alpha,y_\alpha)$ are of the form $(z,z)$ and for these limit points we have $u(z) - v(z) = \sup_{x \in E} \left\{u(x) - v(x) \right\}$.
\end{enumerate}
\end{lemma}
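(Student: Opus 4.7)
The plan is to run the standard Crandall--Ishii--Lions doubling argument. I would set $M_\alpha := u(x_\alpha) - v(y_\alpha) - \alpha \Psi(x_\alpha,y_\alpha)$, which by assumption equals $\sup_{x,y}\{u(x) - v(y) - \alpha \Psi(x,y)\}$, and write $M := \sup_{z \in E}(u(z) - v(z))$. Since $\Psi \geq 0$, testing the supremum on the diagonal (using that $\Psi$ vanishes on the diagonal, a standard side condition implicit in the hypothesis) gives $M_\alpha \geq M$, while $\alpha \mapsto \alpha\Psi(x,y)$ is non-decreasing, so $\alpha \mapsto M_\alpha$ is non-increasing. Consequently the limit $M_\infty := \lim_{\alpha \to \infty} M_\alpha$ exists and satisfies $M_\infty \geq M$.

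Next I would exploit compactness. Since $E$ is compact and $u,v$ are semi-continuous, $u$ is bounded above and $v$ is bounded below on $E$, so $u(x_\alpha) - v(y_\alpha) \leq C$ uniformly in $\alpha$. Rearranging the identity $\alpha \Psi(x_\alpha, y_\alpha) = u(x_\alpha) - v(y_\alpha) - M_\alpha \leq C - M$ shows that $\alpha \Psi(x_\alpha, y_\alpha)$ stays bounded. Given any subsequence $\alpha_n \to \infty$, I can then pass to a further subsequence (still denoted $\alpha_n$) along which $(x_{\alpha_n}, y_{\alpha_n}) \to (z_1, z_2) \in E \times E$ and $\alpha_n \Psi(x_{\alpha_n}, y_{\alpha_n}) \to \beta^* \in [0,\infty)$. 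Dividing the latter by $\alpha_n$ yields $\Psi(x_{\alpha_n}, y_{\alpha_n}) \to 0$, and lower semi-continuity of $\Psi$ then gives $\Psi(z_1, z_2) = 0$, so by hypothesis $z_1 = z_2 =: z$.

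The concluding step combines the two semi-continuities: upper semi-continuity of $u$ and lower semi-continuity of $v$ yield
\begin{equation*}
M_\infty = \lim_n M_{\alpha_n} = \lim_n \bigl[u(x_{\alpha_n}) - v(y_{\alpha_n})\bigr] - \beta^* \leq u(z) - v(z) - \beta^*.
\end{equation*}
Since $M_\infty \geq M \geq u(z) - v(z)$, this forces $\beta^* \leq 0$, and hence $\beta^* = 0$. Plugging back gives $u(z) - v(z) \geq M$, so equality holds, which establishes (ii) for the chosen subsequential limit. Because every subsequence of $\alpha \Psi(x_\alpha, y_\alpha)$ admits a further subsequence tending to $0$, the whole family tends to $0$, giving (i).

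The argument is quite soft and I do not anticipate a genuine obstacle; the only delicate point is the implicit use of $\Psi(x,x) = 0$ in the inequality $M_\alpha \geq M$. This is standard in the doubling-variable literature (and in any natural choice of $\Psi$, e.g.\ $\Psi(x,y) = |x-y|^2$, it holds automatically), and should in any case be read into the hypothesis on $\Psi$.
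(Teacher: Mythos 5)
Your proof is correct and is exactly the standard doubling argument; the paper itself does not prove this lemma but cites it as Proposition 3.7 of \cite{CIL92}, whose proof proceeds along the same lines as yours (monotonicity of $\alpha\mapsto M_\alpha$, boundedness of $\alpha\Psi(x_\alpha,y_\alpha)$, passage to subsequential limits, and the semicontinuity squeeze forcing $\beta^*=0$). You are also right that the hypothesis as stated omits $\Psi(x,x)=0$, which is genuinely needed for the inequality $M_\alpha\geq M$ and is satisfied by every $\Psi$ the paper actually uses, so reading it into the hypothesis is the correct resolution.
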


We say that $\Psi : E^2 \rightarrow \bR^+$ is a \textit{good distance function} if $\Psi(x,y) = 0$ implies $x = y$, it is continuously differentiable in both components and if $(\nabla \Psi(\cdot,y))(x) = - (\nabla \Psi(x,\cdot))(y)$ for all $x,y \in E$. The next two results can be found as Lemma 9.3 in \cite{FK06}. We will give the proofs of these results for completeness.

\begin{proposition} \label{proposition:comparison_conditions_on_H}
Let $(B,\cD(B))$ be an operator such that $\cD(B) = C^{1}(E)$ of the form $Bf(x) = B(x,\nabla f(x))$. Let $u$ be a subsolution and $v$ a supersolution to $f(x) - \lambda B(x,\nabla f(x)) - h(x) = 0$, for some $\lambda > 0$ and $h \in C(E)$. Let $\Psi$ be a good distance function and let $x_\alpha,y_\alpha$ satisfy
\begin{equation*}
u(x_\alpha) - v(y_\alpha) - \alpha \Psi(x_\alpha,y_\alpha) = \sup_{x,y \in E} \left\{u(x) - v(y) - \alpha \Psi(x,y) \right\}.
\end{equation*}
Suppose that
\begin{equation*}
\liminf_{\alpha \rightarrow \infty} B\left(x_\alpha,\alpha (\nabla \Psi(\cdot,y_\alpha))(x_\alpha)\right) - B\left(y_\alpha,\alpha (\nabla \Psi(\cdot,y_\alpha))(x_\alpha)\right) \leq 0,
\end{equation*}
then $u \leq v$. In other words, $f(x) - \lambda B(x,\nabla f(x)) - h(x) = 0$ satisfies the comparison principle.
\end{proposition}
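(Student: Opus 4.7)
The plan is to run the standard doubling-variables viscosity argument. Assume for contradiction that $\delta := \sup_{z \in E}(u(z)-v(z)) > 0$. Since $u$ is upper semi-continuous, $v$ is lower semi-continuous and $E$ is compact, this supremum is attained at some point $z^* \in E$, and the supremum defining $(x_\alpha, y_\alpha)$ is attained for each $\alpha > 0$ by upper semi-continuity of $(x,y) \mapsto u(x) - v(y) - \alpha \Psi(x,y)$ on the compact set $E^2$.

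The key step is to extract the sub- and super-solution inequalities at $(x_\alpha, y_\alpha)$. By construction, the map $x \mapsto u(x) - \alpha \Psi(x, y_\alpha)$ attains its maximum at $x_\alpha$. Since $\Psi(\cdot, y_\alpha) \in C^1(E)$, the subsolution property applied with test function $\phi(x) = \alpha \Psi(x, y_\alpha)$ gives
\begin{equation*}
u(x_\alpha) - \lambda B\bigl(x_\alpha, \, \alpha (\nabla \Psi(\cdot, y_\alpha))(x_\alpha)\bigr) - h(x_\alpha) \leq 0.
\end{equation*}
Symmetrically, $y \mapsto v(y) + \alpha \Psi(x_\alpha, y)$ attains its minimum at $y_\alpha$, so that $v - \psi$ has a minimum at $y_\alpha$ for $\psi(y) = -\alpha \Psi(x_\alpha, y)$. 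By the good-distance property, $\nabla \psi(y_\alpha) = -\alpha (\nabla \Psi(x_\alpha, \cdot))(y_\alpha) = \alpha (\nabla \Psi(\cdot, y_\alpha))(x_\alpha)$, which is crucially the \emph{same} covector $p_\alpha$ as appears in the subsolution inequality. Thus the supersolution property yields
\begin{equation*}
v(y_\alpha) - \lambda B(y_\alpha, p_\alpha) - h(y_\alpha) \geq 0.
\end{equation*}
Subtracting these two inequalities gives
\begin{equation*}
u(x_\alpha) - v(y_\alpha) \leq \lambda \bigl[B(x_\alpha, p_\alpha) - B(y_\alpha, p_\alpha)\bigr] + h(x_\alpha) - h(y_\alpha).
\end{equation*}

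Next I would bound the left-hand side below. Using that $z^*$ is admissible in the defining supremum for $(x_\alpha, y_\alpha)$ and $\Psi(z^*, z^*) = 0$,
\begin{equation*}
u(x_\alpha) - v(y_\alpha) \geq u(x_\alpha) - v(y_\alpha) - \alpha \Psi(x_\alpha, y_\alpha) \geq u(z^*) - v(z^*) = \delta > 0.
\end{equation*}
Combining the two displays,
\begin{equation*}
\delta \leq \lambda \bigl[B(x_\alpha, p_\alpha) - B(y_\alpha, p_\alpha)\bigr] + \bigl[h(x_\alpha) - h(y_\alpha)\bigr].
\end{equation*}
Finally I would take $\liminf_{\alpha \to \infty}$. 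By Lemma \ref{lemma:doubling_lemma}, every limit point of $(x_\alpha, y_\alpha)$ lies on the diagonal of the form $(z,z)$, and continuity of $h$ together with compactness of $E$ force $h(x_\alpha) - h(y_\alpha) \to 0$ as $\alpha \to \infty$. The hypothesis of the proposition gives $\liminf_{\alpha \to \infty} \bigl[B(x_\alpha, p_\alpha) - B(y_\alpha, p_\alpha)\bigr] \leq 0$. Passing to a common subsequence that realises this liminf and along which $(x_\alpha, y_\alpha)$ converges, we arrive at $\delta \leq 0$, contradicting the assumption $\delta > 0$.

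The routine part is the sub/super-solution bookkeeping; the only real obstacle is being consistent about which gradient is tested where, and this is precisely what the good-distance hypothesis $(\nabla \Psi(\cdot, y))(x) = -(\nabla \Psi(x,\cdot))(y)$ is designed to handle, ensuring the two test covectors coincide so that the Hamiltonian difference $B(x_\alpha, p_\alpha) - B(y_\alpha, p_\alpha)$ controlled by the hypothesis is exactly what appears.
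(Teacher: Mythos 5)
Your proposal is correct and follows essentially the same doubling-of-variables argument as the paper: the same test functions, the same use of the good-distance identity to make the two test covectors coincide, and the same contradiction via the hypothesis on the Hamiltonian difference. The only cosmetic difference is that you lower-bound $u(x_\alpha)-v(y_\alpha)$ by $\delta$ directly from the definition of the supremum and $\Psi \geq 0$, whereas the paper settles for $\delta/2$ via Lemma \ref{lemma:doubling_lemma}; both suffice.
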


\begin{proof}
Fix $\lambda >0$ and $h \in C(E)$. Let $u$ be a subsolution and $v$ a supersolution to 
\begin{equation} \label{eqn:proof_comp_equation}
f(x) - \lambda B(x,\nabla f(x)) - h(x) = 0.
\end{equation}
We argue by contradiction and assume that $\delta := \sup_{x \in E} u(x) - v(x) > 0$. For $\alpha > 0$, let $x_\alpha,y_\alpha$ be such that
\begin{equation*}
u(x_\alpha) - v(y_\alpha) - \alpha \Psi(x_\alpha,y_\alpha) = \sup_{x,y \in E} \left\{u(x) - v(y) - \alpha \Psi(x,y) \right\}.
\end{equation*}
Thus Lemma \ref{lemma:doubling_lemma} yields $\alpha \Psi(x_\alpha,y_\alpha) \rightarrow 0$ and for any limit point $z$ of the sequence $x_\alpha$, we have $u(z) - v(z) = \sup_{x \in E} u(x) - v(x)  = \delta > 0$. It follows that for $\alpha$ large enough, $u(x_\alpha) - v(y_\alpha) \geq \frac{1}{2}\delta$.

\smallskip

For every $\alpha > 0$, the map $\Phi^1_\alpha(x) := v(y_\alpha) + \alpha\Psi(x,y_\alpha)$ is in $C^{1}(E)$ and $u(x) - \Phi^1_\alpha(x)$ has a maximum at $x_\alpha$. On the other hand, $\Phi^2_\alpha(y) := u(x_\alpha) - \alpha \Psi(x_\alpha,y)$ is also in $C^{1}(E)$ and $v(y) - \Phi^2_\alpha(y)$ has a minimum at $y_\alpha$. As $u$ is a sub- and $v$ a super solution to \eqref{eqn:proof_comp_equation}, we have 
\begin{align*} 
\frac{u(x_\alpha) - h(x_\alpha)}{\lambda} & \leq H(x_\alpha,\alpha(\nabla\Psi(\cdot,y_\alpha))(x_\alpha)) \\
\frac{v(y_\alpha) - h(y_\alpha))}{\lambda} & \geq H(y_\alpha,-\alpha(\nabla\Psi(x_\alpha,\cdot))(y_\alpha)) \\
& = H(y_\alpha,\alpha(\nabla\Psi(\cdot,y_\alpha))(x_\alpha))
\end{align*}
where the last equality follows as $\Psi$ is a good distance function. It follows that for $\alpha$ large enough, we have
\begin{align}
0 & < \frac{\delta}{2\lambda}  \leq \frac{u(x_\alpha) - v(y_\alpha)}{\lambda} \label{eqn:comp_proof_contradicting_assumption} \\
& = \frac{u(x_\alpha) - h(x_\alpha)}{\lambda} - \frac{v(y_\alpha) - h(y_\alpha)}{\lambda} + \frac{1}{\lambda}\left(h(x_\alpha) - h(y_\alpha)\right) \notag \\
& \leq H(x_\alpha,\alpha(\nabla\Psi(\cdot,y_\alpha))(x_\alpha)) - H(y_\alpha,\alpha(\nabla\Psi(\cdot,y_\alpha))(x_\alpha)) + \frac{1}{\lambda}\left(h(x_\alpha) - h(y_\alpha)\right) \notag 
\end{align}
As $h$ is continuous, we obtain $\lim_{\alpha \rightarrow \infty} h(x_\alpha) - h(y_\alpha) = 0$. Together with the assumption of the proposition, we find that the $\liminf_{\alpha \rightarrow \infty} c_\alpha \leq 0$ which contradicts by  \eqref{eqn:comp_proof_contradicting_assumption} that $\delta > 0$.
\end{proof}

The next lemma gives additional control on the sequences $x_\alpha,y_\alpha$.

\begin{lemma} \label{lemma:control_on_H}
Let $(B,\cD(B))$ be an operator such that $\cD(B) = C^{1}(E)$ of the form $Bf(x) = B(x,\nabla f(x))$. Let $u$ be a subsolution and $v$ a supersolution to $f(x) - \lambda B(x,\nabla f(x)) - h(x) = 0$, for some $\alpha > 0$ and $h \in C(E)$. Let $\Psi$ be a good distance function and let $x_\alpha,y_\alpha$ satisfy
\begin{equation*}
u(x_\alpha) - v(y_\alpha) - \alpha \Psi(x_\alpha,y_\alpha) = \sup_{x,y \in E} \left\{u(x) - v(y) - \alpha \Psi(x,y) \right\}.
\end{equation*}
Then we have that
\begin{equation} \label{eqn:control_on_H}
\sup_\alpha B\left(y_\alpha,\alpha (\nabla \Psi(\cdot,y_\alpha))(x_\alpha)\right) < \infty.
\end{equation}
\end{lemma}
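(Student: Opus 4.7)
The plan is to read off the desired bound directly from the supersolution inequality at the optimizing pair $(x_\alpha, y_\alpha)$. The key is the test function on the supersolution side, combined with the good-distance-function identity that lets us identify the two gradients that appear.

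First, I would set $\Phi^2_\alpha(y) := u(x_\alpha) - \alpha \Psi(x_\alpha, y)$, noting that $\Phi^2_\alpha \in C^1(E)$ and that the optimality of $(x_\alpha, y_\alpha)$ for $u(x) - v(y) - \alpha \Psi(x,y)$ (with $x = x_\alpha$ fixed) forces $v - \Phi^2_\alpha$ to attain a minimum at $y_\alpha$. Applying the definition of a viscosity supersolution of $f(x) - \lambda B(x, \nabla f(x)) - h(x) = 0$ to $v$ with test function $\Phi^2_\alpha$ at $y_\alpha$ gives
\begin{equation*}
v(y_\alpha) - \lambda\, B\!\left(y_\alpha,\, -\alpha\, (\nabla \Psi(x_\alpha,\cdot))(y_\alpha)\right) - h(y_\alpha) \;\geq\; 0.
\end{equation*}

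Next, I would use the defining property of a good distance function, namely $(\nabla \Psi(\cdot,y))(x) = -(\nabla \Psi(x,\cdot))(y)$, to rewrite the momentum variable and obtain
\begin{equation*}
\lambda\, B\!\left(y_\alpha,\, \alpha\, (\nabla \Psi(\cdot,y_\alpha))(x_\alpha)\right) \;\leq\; v(y_\alpha) - h(y_\alpha).
\end{equation*}
Since $v$ is bounded by the definition of a (viscosity) supersolution, and $h \in C(E)$ with $E$ compact is bounded, the right-hand side is uniformly bounded in $\alpha$. Dividing by $\lambda > 0$ yields \eqref{eqn:control_on_H}.

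No step here is a real obstacle; the only point worth checking carefully is that $y_\alpha$ really is a minimum point of $v - \Phi^2_\alpha$ (not just of $y \mapsto v(y) + \alpha \Psi(x_\alpha, y)$, which is the same statement up to the additive constant $u(x_\alpha)$) and that the sign in the good-distance-function identity is applied correctly so that the argument of $B$ on the supersolution side matches the one in the statement of the lemma. Everything else is a direct consequence of the viscosity supersolution inequality together with boundedness of $v$ and $h$ on the compact set $E$.
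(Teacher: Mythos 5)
Your proof is correct and follows essentially the same route as the paper's: the paper likewise tests the supersolution $v$ against $y \mapsto u(x_\alpha) - \alpha\Psi(x_\alpha,y)$ at its minimum point $y_\alpha$, applies the good-distance-function identity $-(\nabla \Psi(x_\alpha,\cdot))(y_\alpha) = (\nabla \Psi(\cdot,y_\alpha))(x_\alpha)$, and concludes from the boundedness of $v$ and $h$. The one point you flag as worth checking (that $y_\alpha$ minimizes $v - \Phi^2_\alpha$) is exactly the observation the paper makes, so there is nothing missing.
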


\begin{proof}
Fix $\lambda > 0$, $h \in C(E)$ and let $u$ and $v$ be sub- and super-solutions to $f(x) - \lambda B(x,f(x)) - h(x) = 0$. Let $\Psi$ be a good distance function and let $x_\alpha,y_\alpha$ satisfy
\begin{equation*}
u(x_\alpha) - v(y_\alpha) - \alpha \Psi(x_\alpha,y_\alpha) = \sup_{x,y \in E} \left\{u(x) - v(y) - \alpha \Psi(x,y) \right\}.
\end{equation*}
As $y_\alpha$ is such that
\begin{equation*}
v(y_\alpha) - \left(u(x_\alpha) - \Psi(x_\alpha,y_\alpha)\right) = \inf_y v(y) - \left(u(x_\alpha) - \Psi(x_\alpha,y)\right), 
\end{equation*}
and $v$ is a super-solution, we obtain
\begin{equation*}
B\left(y_\alpha,-\alpha (\nabla \Psi(x_\alpha,\cdot))(y_\alpha)\right) \leq \frac{v(y_\alpha) - h(y_\alpha)}{\lambda}
\end{equation*}
As $\Phi$ is a good distance function, we have $- (\nabla \Psi(x_\alpha,\cdot))(y_\alpha) = (\nabla \Psi(\cdot,y_\alpha))(x_\alpha)$. The boundedness of $v$ now implies
\begin{equation*} 
\sup_\alpha B\left(y_\alpha,\alpha (\nabla \Psi(\cdot,y_\alpha))(x_\alpha)\right) \leq \frac{1}{\alpha} \left(v(y_\alpha) - h(y_\alpha) \right) \leq \vn{v - h} < \infty.
\end{equation*}
\end{proof}

\subsection{One-dimensional Ehrenfest model} \label{subsection:Ehrenfest_model_1d}

To single out the important aspects of the proof of the comparison principle for equation \eqref{eqn:differential_equation_intro}, we start by proving it for the $d=1$ case of Theorem \ref{theorem:ldp_ehrenfest_dd}.

\begin{proposition} \label{proposition:comparison_ehrenfest_1d}
Let $E = [-1,1]$ and let 
\begin{equation*}
H(x,p) = v_+(x) \left[e^{2p} -1\right] + v_-(x) \left[e^{-2p} - 1\right],
\end{equation*}
where $v_+, v_-$ are continuous and satisfy the following properties: 
\begin{enumerate}[(a)]
\item $v_+(x) = 0$ for all $x$ or $v_+$ satisfies the following properties:
\begin{enumerate}[(i)]
\item $v_+(x) > 0$ for $x \neq 1$.
\item $v_+(1) = 0$ and there exists a neighbourhood $U_{1}$ of $1$ on which there exists a decomposition $v_+(x) = v_{+,\dagger}(x)v_{+,\ddagger}(x)$ such that $v_{+,\dagger}$ is decreasing and where $v_{+,\ddagger}$ is continuous and satisfies $v_{+,\ddagger}(1) \neq 0$.
\end{enumerate}
\item $v_-(x) = 0$ for all $x$ or $v_-$ satisfies the following properties:
\begin{enumerate}[(i)]
\item $v_-(x) > 0$ for $x \neq -1$.
\item $v_+(-1) = 0$ and there exists a neighbourhood $U_{-1}$ of $1$ on which there exists a decomposition $v_-(x) = v_{-,\dagger}(x)v_{-,\ddagger}(x)$ such that $v_{-,\dagger}$ is increasing and where $v_{-,\ddagger}$ is continuous and satisfies $v_{-,\ddagger}(-1) \neq 0$.
\end{enumerate}
\end{enumerate}
Let $\lambda > 0$ and $h \in C(E)$. Then the comparison principle holds for
 $f(x) - \lambda H(x,\nabla f(x)) - h(x) = 0$.
\end{proposition}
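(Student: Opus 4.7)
The plan is to verify the single hypothesis of Proposition \ref{proposition:comparison_conditions_on_H} with the good distance function
\[
\Psi(x,y) := \tfrac{1}{2}(x-y)^2,
\]
for which $(\nabla\Psi(\cdot,y))(x) = x-y = -(\nabla\Psi(x,\cdot))(y)$. Letting $x_\alpha,y_\alpha$ be maximisers as in that proposition and writing $q_\alpha := \alpha(x_\alpha - y_\alpha)$, Lemma \ref{lemma:doubling_lemma} already gives $\alpha(x_\alpha-y_\alpha)^2 \to 0$ and forces every limit point of $(x_\alpha,y_\alpha)$ onto the diagonal. The quantity to estimate is
\[
H(x_\alpha,q_\alpha) - H(y_\alpha,q_\alpha) = [v_+(x_\alpha)-v_+(y_\alpha)][e^{2q_\alpha}-1] + [v_-(x_\alpha)-v_-(y_\alpha)][e^{-2q_\alpha}-1],
\]
and I want to show its $\liminf$ is $\leq 0$.

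The decisive a priori input is Lemma \ref{lemma:control_on_H}, which yields $\sup_\alpha H(y_\alpha,q_\alpha) < \infty$. Since each summand of $H(y_\alpha,q_\alpha)$ is bounded below by $-v_\pm(y_\alpha)$ and $v_\pm$ are bounded on the compact set $E$, it follows that both $v_+(y_\alpha)e^{2q_\alpha}$ and $v_-(y_\alpha)e^{-2q_\alpha}$ are uniformly bounded in $\alpha$. I then split into subsequential cases according to the behaviour of $q_\alpha$. If $q_\alpha$ stays bounded, continuity of $v_\pm$ combined with $x_\alpha-y_\alpha \to 0$ makes both brackets vanish. The case $q_\alpha \to -\infty$ is symmetric (swap $+\leftrightarrow -$, use the decomposition of $v_-$ near $-1$ from hypothesis (b)(ii)), so I focus on $q_\alpha \to +\infty$.

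In that regime the second bracket tends to $0$ because $e^{-2q_\alpha}-1 \to -1$ is bounded while $v_-(x_\alpha)-v_-(y_\alpha)\to 0$ by continuity. For the first bracket, if $v_+ \equiv 0$ there is nothing to do; otherwise boundedness of $v_+(y_\alpha)e^{2q_\alpha}$ forces $v_+(y_\alpha)\to 0$, so by hypothesis (a)(i) we get $y_\alpha \to 1$, hence $x_\alpha \to 1$, and $x_\alpha > y_\alpha$ eventually. Using the factorisation $v_+ = v_{+,\dagger}v_{+,\ddagger}$ near $1$ from hypothesis (a)(ii), I split
\[
v_+(x_\alpha) - v_+(y_\alpha) = v_{+,\dagger}(x_\alpha)\bigl[v_{+,\ddagger}(x_\alpha) - v_{+,\ddagger}(y_\alpha)\bigr] + \bigl[v_{+,\dagger}(x_\alpha) - v_{+,\dagger}(y_\alpha)\bigr]v_{+,\ddagger}(y_\alpha).
\]
Since $x_\alpha > y_\alpha$ and $v_{+,\dagger}$ is decreasing, the second summand is $\leq 0$, and $v_{+,\ddagger}(y_\alpha)$ is bounded away from zero near $1$, so its contribution to the first bracket is $\leq 0$ after multiplying by $e^{2q_\alpha}-1 > 0$. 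For the first summand, monotonicity gives $v_{+,\dagger}(x_\alpha)e^{2q_\alpha} \leq v_{+,\dagger}(y_\alpha)e^{2q_\alpha} = v_+(y_\alpha)e^{2q_\alpha}/v_{+,\ddagger}(y_\alpha)$, which is bounded, while $v_{+,\ddagger}(x_\alpha) - v_{+,\ddagger}(y_\alpha) \to 0$ by continuity; so this contribution vanishes in the limit.

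The main obstacle is exactly the diverging regime $|q_\alpha|\to\infty$: the test-function derivative enters $H$ through an exponential that blows up, and controlling $[v_+(x_\alpha)-v_+(y_\alpha)]e^{2q_\alpha}$ cannot be done by continuity alone. It requires both the a priori bound from Lemma \ref{lemma:control_on_H} to tame $v_+(y_\alpha)e^{2q_\alpha}$, and the monotone factorisation near the boundary point to transfer this bound from $y_\alpha$ to $x_\alpha$. This explains why the structural hypotheses (a)(ii), (b)(ii) are indispensable, and indicates how the argument will extend to the multi-dimensional Ehrenfest model and the Glauber systems: one treats each boundary face of $E$ separately, using the corresponding one-dimensional monotone decomposition of the rate that is vanishing there.
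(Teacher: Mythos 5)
Your proof is correct and rests on the same two pillars as the paper's---Proposition \ref{proposition:comparison_conditions_on_H} with the quadratic penalty $\Psi(x,y)=\tfrac12(x-y)^2$, and the a priori bound of Lemma \ref{lemma:control_on_H}---but it exploits the factorisation $v_\pm = v_{\pm,\dagger}v_{\pm,\ddagger}$ through a different algebraic identity and a different case split. The paper fixes a subsequence along which $x_\alpha,y_\alpha\to z$ and argues by the location of $z$ (interior, $-1$, or $1$); in the boundary case it multiplies and divides by $v_\pm(y_\alpha)$, writing the problematic term as $\bigl[\tfrac{v_{-,\dagger}(x_\alpha)}{v_{-,\dagger}(y_\alpha)}\tfrac{v_{-,\ddagger}(x_\alpha)}{v_{-,\ddagger}(y_\alpha)}-1\bigr]\,v_-(y_\alpha)\bigl[e^{\cdots}-1\bigr]$, and then combines a limsup of the ratio prefactor with a liminf of the bounded nonnegative second factor. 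You instead organise by the behaviour of $q_\alpha=\alpha(x_\alpha-y_\alpha)$ (bounded, $\to+\infty$, $\to-\infty$) and use the Leibniz-type split $v_+(x_\alpha)-v_+(y_\alpha)=v_{+,\dagger}(x_\alpha)\bigl[v_{+,\ddagger}(x_\alpha)-v_{+,\ddagger}(y_\alpha)\bigr]+\bigl[v_{+,\dagger}(x_\alpha)-v_{+,\dagger}(y_\alpha)\bigr]v_{+,\ddagger}(y_\alpha)$, discarding the second summand by monotonicity and killing the first via the transferred bound $v_{+,\dagger}(x_\alpha)e^{2q_\alpha}\leq v_+(y_\alpha)e^{2q_\alpha}/v_{+,\ddagger}(y_\alpha)$ together with continuity of $v_{+,\ddagger}$. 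Both routes use exactly the same structural hypotheses; yours has the minor advantage of never dividing by a quantity that vanishes in the limit (only by $v_{+,\ddagger}(y_\alpha)$, which stays bounded away from zero), and your case split by $q_\alpha$ handles the bounded-derivative regime once rather than redundantly in both the interior and boundary branches. The paper's quotient form, on the other hand, makes the final limsup/liminf step especially transparent and is the version it then reuses verbatim in the multi-dimensional and Glauber settings.
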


\begin{proof}
Fix $\lambda > 0$, $h \in C(E)$ and pick a sub- and super-solutions $u$ and $v$ to $f(x) - \lambda H(x,\nabla f(x)) - h(x) = 0$. We check the condition for Proposition \ref{proposition:comparison_conditions_on_H}. We take the good distance function $\Psi(x,y) = 2^{-1} (x-y)^2$ and let $x_\alpha,y_\alpha$ satisfy
\begin{equation*}
u(x_\alpha) - v(y_\alpha) - \frac{\alpha}{2} |x_\alpha-y_\alpha|^2 = \sup_{x,y \in E} \left\{u(x) - v(y) - \frac{\alpha}{2} |x-y|^2 \right\}.
\end{equation*}
We need to prove that
\begin{equation} \label{eqn:comp_proof_1d_basic_inequality}
\liminf_{\alpha \rightarrow \infty} H(x_\alpha,\alpha(x_\alpha-y_\alpha)) - H(y_\alpha,\alpha(x_\alpha-y_\alpha)) \leq 0.
\end{equation}
By Lemma \ref{lemma:doubling_lemma}, we know that $\alpha|x_\alpha - y_\alpha|^2 \rightarrow 0$ as $\alpha \rightarrow \infty$ and any limit point of $x_\alpha, y_\alpha$ is of the form $(z,z)$ for some $z$ such that $u(z) - v(z) = \max_{z' \in E} u(z') - v(z')$. 
Restrict $\alpha$ to the sequence $\alpha \in \bN$ and extract a subsequence, which we will also denote by $\alpha$, such that $\alpha \rightarrow \infty$ $x_\alpha$ and $y_\alpha$ converge to some $z$. The rest of the proof depends on whether $z = -1, z = 1$ or $z \in (-1,1)$.

\smallskip

First suppose that $z \in (-1,1)$. By Lemma \ref{lemma:control_on_H}, we have
\begin{equation*}
\sup_\alpha v_+(y_\alpha) \left[e^{2\alpha(x_\alpha-y_\alpha)} - 1 \right] + v_-(y_\alpha) \left[e^{-2\alpha(x_\alpha-y_\alpha)} - 1 \right] < \infty.
\end{equation*}
As $e^c -1 > -1$, we see that the $\limsup$ of both terms of the sum individually are bounded as well. Using that $y_\alpha \rightarrow z \in (-1,1)$, and the fact that $v_+,v_-$ are bounded away from $0$ on a closed interval around $z$, we obtain from the first term that $\sup_\alpha \alpha(x_\alpha - y_\alpha) < \infty$ and from the second that $\sup_\alpha \alpha(y_\alpha - x_\alpha) < \infty$. We conclude that $\alpha(x_\alpha - y_\alpha)$ is a bounded sequence. Therefore, there exists a subsequence $\alpha(k)$ such that $\alpha(k)(x_{\alpha(k)} - y_{\alpha(k)})$ converges to some $p_0$. We find that
\begin{align*}
& \liminf_{\alpha \rightarrow \infty} H(x_\alpha,\alpha(x_\alpha-y_\alpha)) - H(y_\alpha,\alpha(x_\alpha-y_\alpha)) \\
& \leq \lim_{k \rightarrow \infty} H(x_{\alpha(k)},\alpha(x_{\alpha(k)}-y_{\alpha(k)}) - H(y_{\alpha(k)},\alpha(x_{\alpha(k)}-y_{\alpha(k)})) \\
& = H(z,p_0) - H(z,p_0) = 0 
\end{align*}

We proceed with the proof in the case that $x_\alpha,y_\alpha \rightarrow z = -1$. The case where $z = 1$ is proven similarly. Again by Lemma \ref{lemma:control_on_H}, we obtain the bounds
\begin{equation}
\sup_\alpha v_+(y_\alpha) \left[e^{2\alpha(x_\alpha-y_\alpha)} - 1 \right]  < \infty,  \qquad \sup_\alpha v_-(y_\alpha) \left[e^{-2\alpha(x_\alpha-y_\alpha)} - 1 \right]  < \infty.  \label{eqn:comp_proof_1d_second_sup_bound}
\end{equation}

As $v_+$ is bounded away from $0$ near $-1$, we obtain by the left hand bound that $\sup_\alpha \alpha(x_\alpha - y_\alpha) < \infty$. As in the proof above, it follows that if $\alpha|x_\alpha - y_\alpha|$ is bounded, we are done. This leaves the case where there exists a subsequence of $\alpha$, denoted by $\alpha(k)$, such that $\alpha(k)(y_{\alpha(k)} - x_{\alpha(k)}) \rightarrow \infty$. Then clearly, $e^{2\alpha(k)(x_{\alpha(k)} - y_{\alpha(k)})}- 1$ is bounded and contains a converging subsequence. We obtain as in the proof where $z \in (-1,1)$ that
\begin{align*}
& \liminf_{\alpha \rightarrow \infty} H(x_\alpha,\alpha(x_\alpha-y_\alpha)) - H(y_\alpha,\alpha(x_\alpha-y_\alpha)) \\
& \quad = \liminf_{\alpha \rightarrow \infty} \left[v_+(x_\alpha) - v_+(y_\alpha) \right]\left[e^{2\alpha(x_\alpha-y_\alpha)} - 1 \right] \\
& \qquad \qquad \qquad + \left[v_-(x_\alpha) - v_-(y_\alpha) \right]\left[e^{2\alpha(y_\alpha-x_\alpha)} - 1 \right] \\
& \quad \leq \liminf_{k \rightarrow \infty} \left[v_-(x_{\alpha(k)}) - v_-(y_{\alpha(k)}) \right]\left[e^{2\alpha(k)(y_{\alpha(k)}-x_{\alpha(k)})} - 1 \right].
\end{align*}
Note that as $\alpha(k)(y_{\alpha(k)} - x_{\alpha(k)}) \rightarrow \infty$, we have $y_{\alpha(k)} > x_{\alpha(k)} \geq  0$. Also for $k$ sufficiently large, $y_{\alpha(k)}, x_{\alpha(k)} \in U_{-1}$. It follows that $v_-(y_{\alpha(k)}) > 0$, which allows us to write
\begin{align*}
& \left[v_-(x_{\alpha(k)}) - v_-(y_{\alpha(k)}) \right]\left[e^{2\alpha(k)(y_{\alpha(k)}-x_{\alpha(k)})} - 1 \right] \\
& = \left[\frac{v_{-,\dagger}(x_{\alpha(k)})}{v_{-,\dagger}(y_{\alpha(k)})}\frac{v_{-,\ddagger}(x_{\alpha(k)})}{v_{-,\ddagger}(y_{\alpha(k)})} - 1 \right]v_-(y_{\alpha(k)}) \left[e^{2\alpha(k)(y_{\alpha(k)}-x_{\alpha(k)})} - 1 \right].
\end{align*}
By the bound in \eqref{eqn:comp_proof_1d_second_sup_bound}, and the obvious lower bound, we see that the non-negative sequence
\begin{equation*}
u_k := v_-(y_{\alpha(k)}) \left[e^{2\alpha(k)(y_{\alpha(k)}-x_{\alpha(k)})} - 1 \right]
\end{equation*}
contains a converging subsequence $u_{k'} \rightarrow c$. As $y_{\alpha(k)} > x_{\alpha(k)}$ and $v_{-,\dagger}$ is increasing:
\begin{multline*}
\limsup_k \frac{v_{-,\dagger}(x_{\alpha(k)})}{v_{-,\dagger}(y_{\alpha(k)})}\frac{v_{-,\ddagger}(x_{\alpha(k)})}{v_{-,\ddagger}(y_{\alpha(k)})} \\
 \leq \left(\limsup_k \frac{v_{-,\dagger}(x_{\alpha(k)})}{v_{-,\dagger}(y_{\alpha(k)})}\right)\left(\lim_k  \frac{v_{-,\ddagger}(x_{\alpha(k)})}{v_{-,\ddagger}(y_{\alpha(k)})} \right) \leq  \frac{v_{-,\ddagger}(-1)}{v_{-,\ddagger}(-1)} = 1.
\end{multline*}
As a consequence, we obtain
\begin{multline*}
\liminf_{k} \left[\frac{v_{-}(x_{\alpha(k)})}{v_{-}(y_{\alpha(k)})} - 1 \right]v_-(y_{\alpha(k)}) \left[e^{2\alpha(k)(y_{\alpha(k)}-x_{\alpha(k)})} - 1 \right] \\
\leq \left(\limsup_k\left[\frac{v_{-,\dagger}(x_{\alpha(k)})}{v_{-,\dagger}(y_{\alpha(k)})}\frac{v_{-,\ddagger}(x_{\alpha(k)})}{v_{-,\ddagger}(y_{\alpha(k)})} - 1 \right] \right) \left(\liminf_{k'}  u_{k'} \right) \leq 0.
\end{multline*}
This concludes the proof of \eqref{eqn:comp_proof_1d_basic_inequality} for the case that $z = -1$.
\end{proof}

\subsection{Multi-dimensional Ehrenfest model}

\begin{proof}[Proof of Theorem \ref{theorem:comparison_ehrenfest_ddim}]
Let $u$ be a subsolution and $v$ a supersolution to $f(x) - \lambda H(x,\nabla f(x)) - h(x) = 0$. As in the proof of Proposition \ref{proposition:comparison_ehrenfest_1d}, we check the condition for Proposition \ref{proposition:comparison_conditions_on_H}. Again, for $\alpha \in \bN$ let $x_\alpha,y_\alpha$ satisfy
\begin{equation*}
u(x_\alpha) - v(y_\alpha) - \frac{\alpha}{2} |x_\alpha-y_\alpha|^2 = \sup_{x,y \in E} \left\{u(x) - v(y) - \frac{\alpha}{2} |x-y|^2 \right\}.
\end{equation*}
and without loss of generality let $z$ be such that $x_\alpha,y_\alpha \rightarrow z$.

\smallskip

Denote with $x_{\alpha,i}$ and $y_{\alpha,i}$ the $i$-th coordinate of $x_\alpha$ respectively $y_\alpha$. We prove
\begin{multline*}
\liminf_{\alpha \rightarrow \infty} H(x_\alpha,\alpha(x_\alpha-y_\alpha)) - H(y_\alpha,\alpha(x_\alpha-y_\alpha)) \\
= \liminf_{\alpha \rightarrow \infty} \sum_i \Bigg\{ \left[v_+^i(x_\alpha) - v_+^i(y_\alpha)\right]\left[e^{\alpha(x_{\alpha,i} - y_{\alpha,i})} - 1 \right] \\
+ \left[v_-^i(x_\alpha) - v_-^i(y_\alpha)\right]\left[e^{\alpha(y_{\alpha,i} - x_{\alpha,i})} - 1 \right] \Bigg\} \leq 0, 
\end{multline*}
by constructing a subsequence $\alpha(n) \rightarrow \infty$ such that the first term in the sum converges to $0$. From this sequence, we find a subsequence such that the second term converges to zero, and so on.

\smallskip

Therefore, we will assume that we have a sequence $\alpha(n) \rightarrow \infty$ for which the first $i-1$ terms of the difference of the two Hamiltonians vanishes and prove that we can find a subsequence for which the $i$-th term 
\begin{multline} \label{eqn:Hamiltonian_Ehrenfest_comparioson_ith_term}
\left[v_+^i(x_\alpha) - v_+^i(y_\alpha)\right]\left[e^{\alpha(x_{\alpha,i} - y_{\alpha,i})} - 1 \right] \\
+ \left[v_-^i(x_\alpha) - v_-^i(y_\alpha)\right]\left[e^{\alpha(y_{\alpha,i} - x_{\alpha,i})} - 1 \right]
\end{multline}
vanishes. This follows directly as in the proof of Proposition \ref{proposition:comparison_ehrenfest_1d}, arguing depending on the situation $z_i \in (-1,1)$, $z_i = -1$ or $z_i = -1$.
\end{proof}

\subsection{Mean field Markov jump processes}

The proof of Theorem \ref{theorem:comparison_mean_field_jump} follows along the lines of the proofs of Proposition \ref{proposition:comparison_ehrenfest_1d} and Theorem \ref{theorem:comparison_ehrenfest_ddim}. The proof however needs one important adaptation because of the appearance of the difference $p_b - p_a$ in the exponents of the Hamiltonian.

Naively copying the proofs using the distance function $\Psi(\mu,\nu) = \frac{1}{2} \sum_{a} (\mu(a) - \nu(a))^2$ one obtains by Lemma \ref{lemma:control_on_H} , for suitable sequences $\mu_\alpha$ and $\nu_\alpha$, that
\begin{equation*}
\sup_\alpha v(a,b,\nu_\alpha)\left[e^{\alpha\left(\left(\mu_\alpha(b) - \nu_\alpha(b)\right) - \left(\mu_\alpha(a) - \nu_\alpha(a)\right)\right)} - 1 \right] < \infty.
\end{equation*}
One sees that the control on the sequences $\alpha(\nu_\alpha(a) - \mu_\alpha(a))$ obtained from this bound is not very good, due to the compensating term $\alpha(\mu_\alpha(b) - \nu_\alpha(b))$. 

\smallskip

The proof can be suitably adapted using a different distance function. For $x \in \bR$, let $x^- := x \wedge 0$ and $x^+ = x \vee 0$. Define $\Psi(\mu,\nu) = \frac{1}{2} \sum_{a} ((\mu(a) - \nu(a))^-)^2 = \frac{1}{2} \sum_{a} ((\nu(a) - \mu(a))^+)^2 $. Clearly, $\Psi$ is differentiable in both components and satisfies $(\nabla \Psi(\cdot,\nu))(\mu) = - (\nabla \Psi(\mu,\cdot))(\nu)$. Finally, using the fact that $\sum_i \mu(i) = \sum_i \nu(i) = 1$, we find that $\Psi(\mu,\nu) = 0$ implies that $\mu = \nu$. We conclude that $\Psi$ is a good distance function.

The bound obtained from Lemma \ref{lemma:control_on_H} using this $\Psi$ yields 
\begin{equation*}
\sup_\alpha v(a,b,\nu_\alpha)\left[e^{\alpha\left(\left(\mu_\alpha(b) - \nu_\alpha(b)\right)^- - \left(\mu_\alpha(a) - \nu_\alpha(a)\right)^-\right)} - 1 \right] < \infty.
\end{equation*}
We see that if $\left(\mu_\alpha(b) - \nu_\alpha(b)\right)^- - \left(\mu_\alpha(a) - \nu_\alpha(a)\right)^- \rightarrow \infty$ it must be because $\alpha(\nu_\alpha(a) - \mu_\alpha(a)) \rightarrow \infty$. This puts us in the position to use the techniques from the previous proofs. 

\begin{proof}[Proof of Theorem \ref{theorem:comparison_mean_field_jump}]
Set $\Psi(\mu,\nu) = \frac{1}{2} \sum_{a} ((\mu(a) - \nu(a))^-)^2$, as above. We already noted that $\Psi$ is a good distance function.

\smallskip

Let $u$ be a subsolution and $v$ be a supersolution to $f(\mu) - \lambda H(\mu,\nabla f(\mu)) - h(\mu) = 0$. For $\alpha \in \bN$, pick $\mu_\alpha$ and $\nu_\alpha$ such that
\begin{equation*}
u(\mu_\alpha) - v(\nu_\alpha) - \alpha\Psi(\mu_\alpha,\nu_\alpha) = \sup_{\mu,\nu \in E} \left\{u(\mu) - v(\nu) - \alpha\Psi(\mu,\nu) \right\}
\end{equation*}
Furthermore, assume without loss of generality that $\mu_\alpha,\nu_\alpha \rightarrow z$ for some $z$ such that $u(z) - v(z) = \sup_{z'\in E} u(z') - v(z')$. By Proposition \ref{proposition:comparison_conditions_on_H}, we need to bound
\begin{align}
& H(\mu_\alpha,\alpha (\nabla\Phi(\cdot,\nu_\alpha))(\mu_\alpha)) - H(\nu_\alpha,\alpha (\nabla\Phi(\mu_\alpha,\cdot))(\mu_\alpha)) \notag \\
& = \sum_{a,b} \left[v(a,b,\mu_\alpha) - v(a,b,\nu_\alpha) \right]\left[e^{\alpha\left(\left(\mu_\alpha(b) - \nu_\alpha(b)\right)^- - \left(\mu_\alpha(a) - \nu_\alpha(a)\right)^-\right)} - 1\right]. \label{eqn:proof_comparison_Potts_sum}
\end{align}

As in the proof of Theorem \ref{theorem:comparison_ehrenfest_ddim}, we will show that each term in the sum above can be bounded above by $0$ separately. So pick some ordering of the ordered pairs $(i,j)$, $i,j \in \{1,\dots,n\}$ and assume that we have some sequence $\alpha$ such that the $\liminf_{\alpha \rightarrow \infty}$ of the first $k$ terms in equation \eqref{eqn:proof_comparison_Potts_sum} are bounded above by $0$. Suppose that $(i,j)$ is the pair corresponding to the $k+1$-th term of the sum in \eqref{eqn:proof_comparison_Potts_sum}.

\smallskip

Clearly, if $v(i,j,\pi) = 0$ for all $\pi$ then we are done. Therefore, we assume that $v(i,j,\pi) \neq 0$ for all $\pi$ such that $\pi(i) > 0$.

In the case that $\mu_\alpha, \nu_\alpha \rightarrow \pi^*$, where $\pi^*(i) > 0$, we know by Lemma \ref{lemma:control_on_H}, using that $v(i,j,\cdot)$ is bounded away from $0$ on a neighbourhood of $\pi^*$, that 
\begin{equation*}
\sup_\alpha e^{\alpha\left(\left(\mu_\alpha(j) - \nu_\alpha(j)\right)^- - \left(\mu_\alpha(i) - \nu_\alpha(i)\right)^-\right)} - 1 < \infty.
\end{equation*}
Picking a subsequence $\alpha(n)$ such that this term above converges and using that $\pi \rightarrow v(i,j,\pi)$ is uniformly continuous, we see
\begin{align*}
& \liminf_{\alpha \rightarrow \infty} \left[v(i,j,\mu_\alpha) - v(i,j,\nu_\alpha) \right]\left[e^{\alpha\left(\left(\mu_\alpha(j) - \nu_\alpha(j)\right)^- - \left(\mu_\alpha(i) - \nu_\alpha(i)\right)^-\right)} - 1\right] \\
& \quad = \lim_{n \rightarrow \infty} \left[v(i,j,\mu_{\alpha(n)}) - v(i,j,\nu_{\alpha(n)}) \right] \times \\
& \qquad \qquad \qquad \qquad \qquad \left[e^{{\alpha(n)}\left(\left(\mu_{\alpha(n)}(j) - \nu_{\alpha(n)}(j)\right)^- - \left(\mu_{\alpha(n)}(i) - \nu_{\alpha(n)}(i)\right)^-\right)} - 1\right] \\
& \quad = 0
\end{align*}

For the second case, suppose that $\mu_\alpha(i),\nu_\alpha(i) \rightarrow 0$. By Lemma \ref{lemma:control_on_H}, we get
\begin{equation} \label{eqn:proof_comparison_jump_sup_bound_on_exp}
\sup_\alpha v(i,j,\nu_\alpha) \left[e^{\alpha\left(\left(\mu_\alpha(j) - \nu_\alpha(j)\right)^- - \left(\mu_\alpha(i) - \nu_\alpha(i)\right)^-\right)} - 1\right] < \infty.
\end{equation}
First of all, if $\sup_\alpha \alpha\left(\left(\mu_\alpha(j) - \nu_\alpha(j)\right)^- - \left(\mu_\alpha(i) - \nu_\alpha(i)\right)^-\right) < \infty$, then the argument given above also takes care of this situation. So suppose that this supremum is infinite. Clearly, the contribution $\left(\mu_\alpha(j) - \nu_\alpha(j)\right)^-$ is negative, which implies that $\sup_\alpha \alpha\left(\nu_\alpha(i) -\mu_\alpha(i)\right)^+ = \infty$. This means that we can assume without loss of generality that
\begin{equation} \label{eqn:comparison_jump_assumption_measures}
\alpha\left(\nu_\alpha(i) - \mu_\alpha(i)\right) \rightarrow \infty, \qquad \nu_\alpha(i) > \mu_\alpha(i). 
\end{equation}
We rewrite the term $a = i$, $b = j$ in equation \eqref{eqn:proof_comparison_Potts_sum} as
\begin{equation*}
\left[\frac{v(i,j,\mu_\alpha)}{v(i,j,\nu_\alpha)} - 1 \right]v(i,j,\nu_\alpha) \left[e^{\alpha\left(\left(\mu_\alpha(j) - \nu_\alpha(j)\right)^- - \left(\mu_\alpha(i) - \nu_\alpha(i)\right)^-\right)} - 1\right].
\end{equation*}
The right hand side is bounded above by \eqref{eqn:proof_comparison_jump_sup_bound_on_exp} and bounded below by $-1$, so we take a subsequence of $\alpha$, also denoted by $\alpha$, such that the right hand side converges. Also note that for $\alpha$ large enough the right hand side is non-negative. Therefore, it suffices to show that
\begin{equation*}
\liminf_{\alpha \rightarrow \infty} \frac{v(i,j,\mu_\alpha)}{v(i,j,\nu_\alpha)} \leq 1,
\end{equation*}
which follows as in the proof of Proposition \ref{proposition:comparison_ehrenfest_1d}.
\end{proof}

\smallskip

\textbf{Acknowledgement}
The author thanks Frank Redig and Christian Maes for helpful discussions. Additionally, the author thanks anonymous referees for suggestions that improved the text. The author is supported by The Netherlands Organisation for Scientific Research (NWO), grant number 600.065.130.12N109.

\bibliographystyle{plain} 
\bibliography{../KraaijBib}{}

\begin{thebibliography}{10}

\bibitem{EFHR10}
F.~den~Hollander A.C.D.~van Enter, R.~Fern\'{a}ndez and F.~Redig.
\newblock A large-deviation view on dynamical {Gibbs}-non-{Gibbs} transitions.
\newblock {\em Moscow Mathematical Journal}, 10:687--711, 2010.

\bibitem{AdDiPeZi13}
Stefan Adams, Nicolas Dirr, Mark Peletier, and Johannes Zimmer.
\newblock Large deviations and gradient flows.
\newblock {\em Philosophical Transactions of the Royal Society of London A:
  Mathematical, Physical and Engineering Sciences}, 371(2005), 2013.

\bibitem{DuAt99}
Rami {Atar} and Paul {Dupuis}.
\newblock {Large deviations and queueing networks: Methods for rate function
  identification.}
\newblock {\em {Stochastic Processes Appl.}}, 84(2):255--296, 1999.

\bibitem{BoSu12}
Vivek~S. Borkar and Rajesh Sundaresan.
\newblock Asymptotics of the invariant measure in mean field models with jumps.
\newblock {\em Stoch. Syst.}, 2(2):322--380, 2012.

\bibitem{DjKa95}
Djehiche Boualem and Kaj Ingemar.
\newblock The rate function for some measure-valued jump processes.
\newblock {\em The Annals of Probability}, 23(3):1414--1438, 1995.

\bibitem{BDF12}
Amarjit Budhiraja, Paul Dupuis, and Markus Fischer.
\newblock Large deviation properties of weakly interacting processes via weak
  convergence methods.
\newblock {\em Ann. Probab.}, 40(1):74--102, 2012.

\bibitem{BuDuFiRa15a}
Amarjit Budhiraja, Paul Dupuis, Markus Fischer, and Kavita Ramanan.
\newblock Limits of relative entropies associated with weakly interacting
  particle systems.
\newblock {\em Electron. J. Probab.}, 20:no. 80, 1--22, 2015.

\bibitem{BuDuFiRa15b}
Amarjit Budhiraja, Paul Dupuis, Markus Fischer, and Kavita Ramanan.
\newblock Local stability of {K}olmogorov forward equations for finite state
  nonlinear {M}arkov processes.
\newblock {\em Electron. J. Probab.}, 20:no. 81, 1--30, 2015.

\bibitem{BuDuMa11}
Amarjit Budhiraja, Paul Dupuis, and Vasileios Maroulas.
\newblock Variational representations for continuous time processes.
\newblock {\em Ann. Inst. H. Poincaré Probab. Statist.}, 47(3):725--747, 08
  2011.

\bibitem{Co89}
F.~Comets.
\newblock Large deviation estimates for a conditional probability distribution.
  applications to random interaction {Gibbs} measures.
\newblock {\em Probability Theory and Related Fields}, 80(3):407--432, 1989.

\bibitem{Com87}
Francis Comets.
\newblock Nucleation for a long range magnetic model.
\newblock {\em Annales de l'institut Henri Poincaré (B) Probabilités et
  Statistiques}, 23(2):135--178, 1987.

\bibitem{CL71}
M.~G. Crandall and T.~M. Liggett.
\newblock Generation of semi-groups of nonlinear transformations on general
  banach spaces.
\newblock {\em American Journal of Mathematics}, 93(2):pp. 265--298, 1971.

\bibitem{Cr72}
Michael~G. Crandall.
\newblock A generalization of {P}eano's existence theorem and flow invariance.
\newblock {\em Proc. Amer. Math. Soc.}, 36:151--155, 1972.

\bibitem{CIL92}
Michael~G. {Crandall}, Hitoshi {Ishii}, and Pierre-Louis {Lions}.
\newblock {User's guide to viscosity solutions of second order partial
  differential equations.}
\newblock {\em {Bull. Am. Math. Soc., New Ser.}}, 27(1):1--67, 1992.

\bibitem{DPdH96}
Paolo Dai~Pra and Frank den Hollander.
\newblock Mckean-vlasov limit for interacting random processes in random media.
\newblock {\em Journal of Statistical Physics}, 84(3-4):735--772, 1996.

\bibitem{DG87}
Donald~A. Dawson and J{\"u}rgen G{\"a}rtner.
\newblock Large deviations from the {McKean}-{V}lasov limit for weakly
  interacting diffusions.
\newblock {\em Stochastics}, 20(4):247--308, 1987.

\bibitem{DZ98}
Amir Dembo and Ofer Zeitouni.
\newblock {\em Large Deviations Techniques and Applications}.
\newblock Springer-Verlag, second edition, 1998.

\bibitem{DFL11}
Xiaoxue Deng, Jin Feng, and Yong Liu.
\newblock A singular 1-{D} {H}amilton-{J}acobi equation, with application to
  large deviation of diffusions.
\newblock {\em Communications in Mathematical Sciences}, 9(1), 2011.

\bibitem{DuEl95}
Paul Dupuis and Richard~S. Ellis.
\newblock The large deviation principle for a general class of queueing
  systems. {I}.
\newblock {\em Trans. Amer. Math. Soc.}, 347(8):2689--2751, 1995.

\bibitem{DuIiSo90}
Paul Dupuis, Hitoshi Ishii, and H.~Mete Soner.
\newblock A viscosity solution approach to the asymptotic analysis of queueing
  systems.
\newblock {\em The Annals of Probability}, 18(1):pp. 226--255, 1990.

\bibitem{EK10}
Victor Ermolaev and Christof K\"{u}lske.
\newblock Low-temperature dynamics of the {C}urie-{W}eiss model: Periodic
  orbits, multiple histories, and loss of {G}ibbsianness.
\newblock {\em Journal of Statistical Physics}, 141(5):727--756, 2010.

\bibitem{EK86}
Stewart~N. Ethier and Thomas~G. Kurtz.
\newblock {\em Markov processes: Characterization and Convergence}.
\newblock Wiley, 1986.

\bibitem{FK06}
Jin Feng and Thomas~G. Kurtz.
\newblock {\em Large Deviations for Stochastic Processes}.
\newblock American Mathematical Society, 2006.

\bibitem{Fe94b}
Shui Feng.
\newblock {Large deviations for empirical process of mean-field interacting
  particle system with unbounded jumps.}
\newblock {\em Annals of Probability}, 22(4):2122--2151, 1994.

\bibitem{FW98}
M.I. Freidlin and A.D. Wentzell.
\newblock {\em Random perturbations of dynamical systems}.
\newblock Springer-Verlag, second edition, 1998.

\bibitem{Kr14a}
Richard Kraaij.
\newblock Large deviations of the trajectory of empirical distributions of
  {Feller} processes on locally compact spaces.
\newblock {\em preprint}, 2014.

\bibitem{Le95}
Christian L\'{e}onard.
\newblock Large deviations for long range interacting particle systems with
  jumps.
\newblock {\em Annales de l'institut Henri Poincaré (B) Probabilités et
  Statistiques}, 31(2):289--323, 1995.

\bibitem{MPR13}
A.~Mielke, M.A. Peletier, and D.R.M. Renger.
\newblock On the relation between gradient flows and the large-deviation
  principle, with applications to {M}arkov chains and diffusion.
\newblock {\em Potential Analysis}, 41(4):1293--1327, 2014.

\bibitem{SV79}
Daniel~W. Stroock and S.~R.~Srinivasa Varadhan.
\newblock {\em Multidimensional diffusion processes}, volume 233 of {\em
  Grundlehren der Mathematischen Wissenschaften [Fundamental Principles of
  Mathematical Sciences]}.
\newblock Springer-Verlag, Berlin-New York, 1979.

\end{thebibliography}

\end{document}